\newtheorem{Lemma}{Lemma}
\newtheorem{Theorem}{Theorem}
\newtheorem{Remark}{Remark}
\newtheorem{Proposition}{Proposition}
\title{Three-dimensional inverse acoustic scattering problem by the BC-method}
\author{M.I.Belishev\thanks {St.Petersburg Department of Steklov Mathematical
        Institute, St.Petersburg, Russia, e-mail: belishev@pdmi.ras.ru}\,\,\,\,and\,\,\,\,A.F.Vakulenko\thanks {St.Petersburg Department of Steklov Mathematical
        Institute, St.Petersburg, Russia, e-mail: vak@pdmi.ras.ru.}}
\date{}
\begin{document}

\maketitle

\begin{abstract}
Let $\Sigma:=[0,\infty)\times S^2$, $\mathscr F:=L_2(\Sigma)$. The
{\it forward} acoustic scattering problem under consideration is
to find $u=u^f(x,t)$ satisfying
\begin{align}
\label{Eq 01} &u_{tt}-\Delta u+qu=0, && (x,t) \in {\mathbb R}^3 \times (-\infty,\infty); \\
\label{Eq 02} &u \mid_{|x|<-t} =0 , && t<0;\\
\label{Eq 03} &\lim_{s \to -\infty}
s\,u((-s+\tau)\,\omega,s)=f(\tau,\omega), && (\tau,\omega)   \in
\Sigma;
\end{align}
for a real valued compactly supported potential $q\in
L_\infty(\Bbb R^3)$ and a control $f \in\mathscr F$. The response
operator $R: \mathscr F\to\mathscr F$,
\begin{align*}
& (Rf)(\tau ,\omega )\,:= \lim_{s \to +\infty} s\, u^f((s+\tau
)\,\omega ,s), \quad (\tau ,\omega ) \in \Sigma
\end{align*}
depends on $q$ {\it locally}: if $\xi>0$ and $f\in\mathscr
F^\xi:=\{f\in\mathscr F\,|\,\,\,f\!\mid_{[0,\xi)}=0\}$ holds, then
the values $(Rf)\!\mid_{\tau\geqslant\xi}$ are determined by
$q\!\mid_{|x|\geqslant\xi}$ (do not depend on
$q\!\mid_{|x|<\xi}$).

The {\it inverse problem} is: for an arbitrarily fixed $\xi>0$, to
determine $q\mid_{|x|\geqslant\xi}$ from $X^\xi
R\upharpoonright\mathscr F^\xi$, where $X^\xi$ is the projection
in $\mathscr F$ onto $\mathscr F^\xi$. It is solved by a relevant
version of the boundary control method. The key point of the
approach are recent results on the controllability of the system
(\ref{Eq 01})--(\ref{Eq 03}).
\end{abstract}

\section{Introduction}

\subsubsection*{About the paper}

The boundary control (BC-) method is an approach to inverse
problems of mathematical physics  \cite{B IP 97,B UMN,B IPI
2022,BKor,BSim_AA_2}. It is of interdisciplinary character and
exploits the various connections between the inverse problems and
control and system theory, function analysis (operator model
theory, Banach algebras), asymptotic methods (Geometrical Optics),
complex analysis. The given paper provides a version of the
BC-method relevant to the three-dimensional scattering problem for
the locally perturbed wave (acoustic) equation. Its peculiarity
and advantage is a local (time-optimal) recovering the parameters
under determination. This property is in demand in actual
applications, such as acoustics and geophysics.

\subsubsection*{Setups and results}

\noindent$\bullet$\,\,\ We denote  $B_R(x) :=\left \{x' \in {\Bbb
R}^3 \mid |x-x'| < R\right \}, S^2:=\left \{\theta \in {\Bbb R}^3
\mid |\theta|=1 \right \}$, $\Sigma:=[0,\infty)\times S^2$. Let $q
\in L_\infty(\Bbb R^3)$ be a real valued compactly supported
function ({\it potential}) provided ${\rm supp\,} q\subset
B_a(0)$. We assume $B_a(0)$ to be the minimal ball that contains
${\rm supp\,} q$ and say $a$ to be the radius of the potential.

The system under consideration is
\begin{align}
\label{Eq 1} & u_{tt}-\Delta u+qu=0, && (x,t) \in {\Bbb R}^3 \times (-\infty,0); \\
\label{Eq 2} & u \mid_{|x|<-t} =0 , && t<0;\\
\label{Eq 3} & \lim_{s \to -\infty}
s\,u((-s+\tau)\,\omega,s)=f(\tau,\omega), && (\tau,\omega) \in
\Sigma;
\end{align}
where $f\in L_2(\Sigma)$ is a {\it control}, $u=u^f(x,t)$ is a
solution ({\it wave}); the value $t=0$ is regarded as a final
moment. In the mean time, by the hyperbolicity of the system, its
extension
\begin{align}
\label{Eq 1ext} & u_{tt}-\Delta u+qu=0 \qquad \text{in}\,\,\,\, \{(x,t)\,|\,\,x\in\Bbb R^3,\,\,-\infty<t<|x|\}; \\
\label{Eq 2ext} & u \mid_{|x|<-t} =0 , \qquad \qquad \,\,\,t<0;\\
\label{Eq 3ext} & \lim_{s \to -\infty}
s\,u((-s+\tau)\,\omega,s)=f(\tau,\omega), \qquad (\tau,\omega) \in
\Sigma;
\end{align}
is well defined. With the extended system one associates a {\it
response operator}
\begin{align*}
& (Rf)(\tau ,\omega )\,:= \lim_{s \to +\infty} s\, u^f((s+\tau
)\,\omega ,s), \quad (\tau ,\omega ) \in \Sigma.
\end{align*}
The inverse problem, the setup of which is specified below, is to
determine $q$ from the given $R$.
\smallskip

\noindent$\bullet$\,\,\ To formulate the result, introduce the
spaces $\mathscr F:=L_2(\Sigma)$ and $\mathscr H:=L_2(\Bbb R^3)$
along with the families of their subspaces $\mathscr
F^\xi:=\{f\in\mathscr F\,|\,\,f\!\mid_{\tau<\xi}=0\}$ and
$\mathscr H^\xi:=\{y\in\mathscr H\,|\,\,\,y\mid_{B_\xi(0)}=0\}$
for $\xi>0$. Let $X^\xi$ be the projection in $\mathscr F$ onto
$\mathscr F^\xi$ which cuts off controls on
$\Sigma^\xi:=\{(\tau,\omega)\in\Sigma\,|\,\,\tau\geqslant\xi\}$.

As is known, the operator $W:\mathscr F\to\mathscr H$,
$Wf:=u^f(\cdot,0)$ is bounded (see, e.g., \cite{BV_3}). By
hyperbolicity of the system, $f\in\mathscr F^\xi$ implies
$u^f(\cdot,0)\in \mathscr H^\xi$, whereas the values of
$u^f(\cdot,0)$ are determined by the part $q\!\mid_{|x|\geqslant
\xi}$ of the potential (do not depend on $q\!\mid_{|x|< \xi}$). By
the same arguments, the values $(Rf)\!\mid_{\tau\geqslant\xi}$ are
also determined by $q\!\mid_{|x|\geqslant\xi}$. Such a locality of
the correspondence $q\mapsto R$ motivates the following setup of
the {\it inverse problem}: for an arbitrarily fixed $\xi>0$, to
determine $q\mid_{|x|\geqslant\xi}$ from the operator $X^\xi
R\upharpoonright\mathscr F^\xi$, which acts in the subspace of the
"delayed"\, controls $\mathscr F^\xi$.
\smallskip

Our main result is the following. For an arbitrary $\xi>0$, given
the operator $R^\xi:=X^\xi R\upharpoonright\mathscr F^\xi$, which
acts in $\mathscr F^\xi$, we recover the operator $W^\xi:=
W\upharpoonright\mathscr F^\xi$, which acts from $\mathscr F^\xi$
to $\mathscr H^\xi$. The corresponding procedure is referred to as
a {\it wave visualization}. The knowledge of $W^\xi$ enables one
to recover the graph of the operator
$(\Delta-q)\upharpoonright\mathscr H^\xi$. The graph evidently
determines the part $q\!\mid_{\Bbb R^3\setminus B_\xi(0)}$ of the
potential. A locality (time-optimality) of the determination
$R^\xi\Rightarrow q\!\mid_{\Bbb R^3\setminus B_\xi(0)}$ is a
specifics and main advantage of the BC-method.

\subsubsection*{Comments}

\noindent$\bullet$\,\,\ The problem under consideration is an "old
debt"\, of the BC-method. Its solution is prepared by the papers
\cite{BV_2,BV_3,BV JIIPP centrifug pot,BV s-points JMA 2010}. The
final step was made in the recent paper \cite{BV_11}, which has
revealed the character of controllability of the system (\ref{Eq
1})--(\ref{Eq 3}). This is in keeping with the system theory
thesis, which the BC-method follows to: the better a system is
controllable, the richer the information, which can be extracted
from external observations on the system \cite{KFA}. Fortunately,
in the given case, the controllability turns out to be suitable
for applying the BC-technique \cite{BV_11}.
\smallskip

\noindent$\bullet$\,\,\ One more philosophical thesis coming from
the system theory \cite{KFA}, which the BC-method follows to, is
the following. The observer, which implements external
measurements on a system, is not able to operate with its
(invisible) inner states. To recover the system, the observer
constructs certain "copies" of the inner states, extracting them
from the measurement data. Such a constructing is interpreted as a
{\it visualization}. In all versions of the BC-method such copies
are present and used in explicit or implicit form. In the given
paper, this role is played by the space $\tilde{\mathscr H^\xi}$,
which provides the copies $\tilde u^f$ of the (invisible) waves
$u^f$ in $\mathscr H^\xi$. The space $\tilde{\mathscr H^\xi}$ is
constructed via the response operator $R^\xi$.

Note that visualization of waves through time-domain inverse data
has been used in other works as well: see, e.g., \cite{R_Ch_DeF}.
\smallskip

\noindent$\bullet$\,\,\ The main technical tool for solving a
class of inverse problems by the BC-method is the {\it amplitude
integral} (AI), which is a generalization of the triangular
truncation operator integral by M.S.Brodskii and M.G.Krein
\cite{Brod,GK,BKach,B IPI 2022}. We provide its version relevant
for the scattering problem. The theoretical-operator scheme of the
BC-method based upon the triangular factorization, is presented in
the paper \cite{BSim_AA_2}, where the AI is referred to as a {\it
diagonal} of the operator $W$.

\section{Forward problem}\label{Sec Forward}

\subsubsection*{Dynamical system: spaces and operators}

The following is the standard system theory attributes of the
system (\ref{Eq 1})--(\ref{Eq 3}).

\noindent$\bullet$\,\,\ The {\it outer space} of controls is
${\mathscr F} :=L_2(\Sigma)$. It contains the subspaces
\begin{equation*}
{\mathscr F}^\xi := \left \{ f \in {\mathscr
F}\,|\,\,\,\,f\!\mid_{\tau<\xi}=0 \right \}, \quad \xi > 0
\end{equation*}
consisting of the delayed controls \,($\xi$ is the delay).
\smallskip

\noindent$\bullet$\,\,\ The {\it inner space} of states is
${\mathscr H} := L_2({\Bbb R}^3)$; the waves $u^f(\cdot,t)$ are
the time dependent elements of $ {\mathscr H}$. It contains the
subspaces
\begin{equation*}
  {\mathscr H}^\xi := \left \{ y \in {\mathscr H}\!\mid
{\rm supp}\, y \subset    {\Bbb R}^3 \setminus B_\xi(0)  \right
\}, \quad \xi > 0.
\end{equation*}

Also, the inner space contains the {\it reachable sets}
\begin{equation*}
 {\mathscr U}^\xi \,:=\,\left \{  u^f(\cdot,0)\!\mid
f \in {\mathscr F}^\xi  \right \},  \quad \xi > 0.
\end{equation*}
They are the closed subspaces and, by hyperbolicity of the problem
(\ref{Eq 1})--(\ref{Eq 3}), the embedding ${\mathscr U}^\xi
\subset {\mathscr H}^\xi$ holds (see, e.g., \cite{BV_3}).

The subspaces (unreachable sets)
\begin{equation*}
{\mathscr D}^\xi :=  {\mathscr H}^\xi \ominus
{\mathscr U}^\xi , \quad \xi > 0
\end{equation*}
are called {\it defect} subspaces. The following important fact is
recently established in \cite{BV_11}. We say a $y\in\mathscr
H^\xi$ to be a {\it polyharmonic} function of the order $n\in\Bbb
N$  if $(-\Delta +q)^n\, y=0$ holds in ${\Bbb R}^3 \setminus
\overline{B_\xi(0)}$, and write $y\in\mathscr A^\xi_n$. Denote
$\mathscr A^\xi:={\overline{{\rm span\,}\{\mathscr
A^\xi_n\,|\,\,n\geqslant 1\}}}$.
\begin{Proposition}\label{Prop H=U+D}
The relation
\begin{equation}\label{Eq char Def}
\mathscr A^\xi\,=\,{\mathscr D}^\xi,\qquad\xi>0
\end{equation}
(the closure in $\mathscr H$) holds.
\end{Proposition}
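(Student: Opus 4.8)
The subspace $\mathscr U^\xi$ is closed, so $\mathscr D^\xi=\mathscr H^\xi\ominus\mathscr U^\xi$ is exactly the kernel of the adjoint of $W^\xi:=W\!\upharpoonright\!\mathscr F^\xi:\mathscr F^\xi\to\mathscr H^\xi$, i.e.
\[
\mathscr D^\xi=\{\,y\in\mathscr H^\xi\ :\ (y,u^f(\cdot,0))_{\mathscr H}=0\ \ \text{for all}\ \ f\in\mathscr F^\xi\,\}.
\]
The plan is to prove the two inclusions in (\ref{Eq char Def}) separately. It helps to record how the orthogonality reads in PDE terms: introducing the dual wave $v=v^y$ with Cauchy data $(v,v_t)\!\mid_{t=0}=(0,y)$ and integrating the energy (Green) identity $\tfrac{d}{dt}[(u^f_t,v)_{\mathscr H}-(u^f,v_t)_{\mathscr H}]=0$ from $t=-\infty$ to $t=0$, the condition $(y,u^f(\cdot,0))=0$ for all $f\in\mathscr F^\xi$ is equivalent to the vanishing of the incoming amplitude of $v$ on the sector $\{\tau\geqslant\xi\}\subset\Sigma$ (for $\tau<\xi$ it vanishes automatically, since $\operatorname{supp}y\subset\mathbb R^3\setminus B_\xi(0)$ and by the finite speed of propagation).

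\emph{The inclusion $\mathscr A^\xi\subseteq\mathscr D^\xi$.} The key observation is that $\mathscr U^\xi$ is invariant under $\Delta-q$ and, more precisely, $\overline{(\Delta-q)\,\mathscr U^\xi}=\mathscr U^\xi$. This comes from the time-translation invariance of (\ref{Eq 1})--(\ref{Eq 3}): a backward shift of the final moment is a shift of the control in the variable $\tau$, so $u^f(\cdot,h)=u^{f_h}(\cdot,0)$ with $f_h(\tau,\omega)=f(\tau+h,\omega)$ and $f_h\in\mathscr F^\xi$ for $h\leqslant 0$; differentiating twice at $h=0$ and using the equation gives $(\Delta-q)u^f(\cdot,0)=u^f_{tt}(\cdot,0)=u^{\partial_\tau^2 f}(\cdot,0)$, where $\{\partial_\tau^2 f\}$ runs over a dense subset of $\mathscr F^\xi$. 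Iterating, $\mathscr U^\xi\subseteq\overline{\operatorname{Ran}\!\big((\Delta-q)^n\!\upharpoonright\!\mathscr H^\xi\big)}$ for every $n\geqslant 1$. Since for the self-adjoint realization of $\Delta-q$ in $\mathscr H^\xi$ one has $\mathscr A^\xi_n=\ker\big((\Delta-q)^n\big)=\big(\overline{\operatorname{Ran}(\Delta-q)^n}\big)^{\!\perp}$, this yields $\mathscr A^\xi_n\perp\mathscr U^\xi$, hence $\mathscr A^\xi_n\subseteq\mathscr D^\xi$; passing to closed spans gives $\mathscr A^\xi\subseteq\mathscr D^\xi$. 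The one point needing care is matching the class $\mathscr A^\xi_n$ of the statement (polyharmonicity only in the open exterior of $B_\xi(0)$) with the kernel of the self-adjoint power, i.e. controlling the behaviour on $\partial B_\xi(0)$; this is settled by a cut-off/Green's-formula argument and the fact that the reachable waves $u^f(\cdot,0)$, $f\in\mathscr F^\xi$, vanish to high order on $\partial B_\xi(0)$.

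\emph{The inclusion $\mathscr D^\xi\subseteq\mathscr A^\xi$.} This is the substantial half and is precisely the controllability result of \cite{BV_11}; I would follow its strategy. Given $y\in\mathscr D^\xi$, pass to the Laplace transform in $t$ of $v^y$, which converts it into the resolvent family $k\mapsto(-\Delta+q+k^2)^{-1}y$; the vanishing of the incoming amplitude of $v^y$ on $\{\tau\geqslant\xi\}$ then becomes a condition on the low-frequency (near $k=0$) behaviour of the far field of this family off $B_\xi(0)$. A sharp unique-continuation argument across the relevant surfaces in the exterior region --- the analytic core of \cite{BV_11} --- combined with the known asymptotic expansion of the resolvent at $k=0$, forces the Taylor coefficients of $(-\Delta+q+k^2)^{-1}y$ at $k=0$ to vanish in $\mathbb R^3\setminus\overline{B_\xi(0)}$; these coefficients satisfy $(-\Delta+q)^n(\,\cdot\,)=0$ there, i.e. lie in $\mathscr A^\xi_n$, and from their recovery one represents $y$ as a limit of combinations of such elements, so $y\in\mathscr A^\xi$.

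\emph{Main obstacle.} The first inclusion is essentially formal once the $(\Delta-q)$-invariance of $\mathscr U^\xi$ is in hand. All the difficulty sits in the second one: it requires turning an ``observations at spatial infinity vanish'' statement into a purely local (polyharmonicity) conclusion, which rests on the sharp unique-continuation/completeness machinery of \cite{BV_11}; the delicate points are the low-frequency asymptotics of the resolvent and the propagation of the vanishing through the exterior region up to $\partial B_\xi(0)$.
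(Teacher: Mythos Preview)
The paper does not prove Proposition~\ref{Prop H=U+D}; it quotes the result from \cite{BV_11} and only remarks that the inclusion $\mathscr A^\xi\subset\mathscr D^\xi$ was already obtained in \cite{BV_3}. So there is no in-paper argument to compare against; your proposal has to be judged on its own.

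Your argument for $\mathscr A^\xi\subseteq\mathscr D^\xi$ via the identity $u^f_{tt}(\cdot,0)=u^{\partial_\tau^2 f}(\cdot,0)$ and the resulting $(\Delta-q)$-invariance of $\mathscr U^\xi$ is correct in substance, and is presumably close to what \cite{BV_3} does. Two quibbles. First, invoking a ``self-adjoint realization of $\Delta-q$ in $\mathscr H^\xi$'' to write $\mathscr A^\xi_n=\ker=(\overline{\operatorname{Ran}})^\perp$ is off: the definition of $\mathscr A^\xi_n$ carries no boundary condition on $\partial B_\xi(0)$, so it is not the kernel of any self-adjoint power. The clean route is the one you allude to at the end of that paragraph: restrict to controls supported in $\{\tau>\xi\}$ (dense in $\mathscr F^\xi$), so that $u^f(\cdot,0)$ is supported strictly away from $\partial B_\xi(0)$, and integrate $((\Delta-q)^n u^g(\cdot,0),y)$ by parts with no boundary contribution. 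Second, your parenthetical that the incoming amplitude of $v^y$ vanishes for $\tau<\xi$ ``by finite speed'' is not right: support of $y$ in $\{|x|\geqslant\xi\}$ gives no such restriction on the incoming representation (were it true together with the orthogonality condition, you would get $W^*y=0$ on all of $\Sigma$ and hence $y=0$, contradicting $\mathscr D^\xi\neq\{0\}$).

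For $\mathscr D^\xi\subseteq\mathscr A^\xi$ you, like the paper, defer to \cite{BV_11}. Your speculative outline of what that argument might be is not coherent as written: the Taylor coefficients of $k\mapsto(-\Delta+q+k^2)^{-1}y$ at $k=0$ are iterated \emph{preimages} of $y$ under $-\Delta+q$, not $q$-polyharmonic functions, and vanishing of the incoming amplitude on all of $\{\tau\geqslant\xi\}$ is an all-frequencies constraint, not a low-frequency one. So the sketch adds no real content beyond the citation; replacing it by an honest pointer to \cite{BV_11} would be more accurate.
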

The relation (\ref{Eq char Def}) enhances the embedding $\mathscr
A^\xi\subset \mathscr D^\xi$ proved in \cite{BV_3}
\smallskip

\noindent$\bullet$\,\,\ The {\it control operator} of the system
is $W: {\mathscr F} \to {\mathscr H}$ \quad
$$
Wf\,:=\,u^f(\cdot,0).
$$
It is bounded \cite{LP,BV_3} and the representation
\begin{equation*}
W\,=\,W_0+K
\end{equation*}
holds with a compact operator $K$, where $W_0$ is the control
operator of the (unperturbed) system (\ref{Eq 1})--(\ref{Eq 3})
with $q=0$. Note that the compactness of $K$ is proved in
\cite{BV_3} under the assumption that the potential $q$ is
compactly supported. In what follows, by $u^f_0$ we denote the
waves in the unperturbed system.

Recall that ${\rm supp\,}q\subset B_a(0)$.  The influence domain
of the potential is
$$
D\,:=\,\{(x,t)\,|\,\,t>-a,\,\,t<|x|<t+2a\}.
$$
Outside it, the perturbed and unperturbed waves coincide: we have
\begin{equation}\label{Eq u=u0 outside D}
u^f\,=\,u^f_0 \qquad {\text{in}}\,\,\,\Bbb R^3\setminus D.
\end{equation}

Operator $W_0$ is unitary: $W_0^*=W_0^{-1}$ holds. Later on we
consider $W_0$ and $W$ in more detail. \smallskip

\noindent$\bullet$\,\,\ The {\it response operator} $R:\mathscr
F\to\mathscr F$
\begin{equation*}
(Rf)(\tau ,\omega ):= \lim_{s \to +\infty} s\, u^f((s+\tau
)\,\omega ,s), \quad (\tau ,\omega ) \in \Sigma
\end{equation*}
is associated with the extended system (\ref{Eq 1ext})--(\ref{Eq
3ext}). It is compact and self-adjoint \cite{BV_3}. The relation
(\ref{Eq u=u0 outside D}) easily leads to
\begin{equation}\label{Eq Rf2a=0}
Rf\!\mid_{\tau>2a}\,=\,0, \qquad f\in \mathscr F.
\end{equation}

Since the operator $\Delta-q$ that governs the evolution of the
system, does not depend on time, the relation
\begin{equation}\label{Eq Delay Rel}
u^f(\cdot, t-h)\,=\,u^{T_h f}(\cdot,t), \qquad -\infty<
t<\infty,\,\,h\geqslant 0
\end{equation}
holds, where $T_h$ is the delay (shift) operator acting in
$\mathscr F$ by $(T_hf)(\cdot,t):=f(\cdot,t-h)$ (assuming
$f\!\mid_{\tau<0}=0$). As a consequence, one can derive the
relation $RT_h=T_h^*R$.

If the potential is smooth enough, The response operator can be
represented in the form
\begin{equation}\label{Eq Repres R}
(Rf)(\tau,\omega)\,=\,\int_\Sigma
p\,(t+s;\,\omega,\theta)\,d\tau\,d\theta,\qquad
(\tau,\omega)\in\Sigma.
\end{equation}
The dependence of the kernel on the sum $t+s$ corresponds to the
intertwinning with the shift mentioned above. In the mean time, by
virtue of (\ref{Eq Rf2a=0}), the kernel $p$ obeys ${\rm
supp\,}p\subset [0,2a]\times S^2\times S^2$.
\smallskip

\noindent$\bullet$\,\,\ A map $ C:{\mathscr F} \to {\mathscr F}$,
$$
C\,:=\,W^*W
$$
is called a {\it connecting operator}. It is a bounded positive
operator. By the definition, for $f, g \in {\mathscr F}$ one has
\begin{equation}\label{Eq 2.31}
(Cf,g)_{{\mathscr F}}=(Wf,Wg)_{{\mathscr H}}= \left (
u^f(\cdot,0),u^g(\cdot,0)\right )_{\mathscr H}~,
\end{equation}
i.e, $C$ connects the Hilbert metrics of the outer and inner
spaces. As is shown in \cite{BV_3}, the relation
\begin{equation}\label{Eq C=I+R}
C=I+R
\end{equation}
holds.

\subsubsection*{Unperturbed system}

The unperturbed system is
\begin{align}
\label{Eq 10} & u_{tt}-\Delta u=0, && (x,t) \in {\mathbb R}^3 \times (-\infty,0); \\
\label{Eq 20} & u \!\mid_{|x|<-t} =0 , && t<0;\\
\label{Eq 30} & \lim_{s \to -\infty}
s\,u((-s+\tau)\,\omega,s)=f(\tau,\omega), && (\tau,\omega)   \in
\Sigma;
\end{align}
the waves are $u^f_0(x,t)$. Here are some known facts about it
taken from the papers \cite{LP,BV_2,BV_3}.
\smallskip

\noindent$\bullet$\,\,\, The solution $u_0^f$ can be represented
in explicit form as follows.

Fix $\omega \in S^2$ and define
\begin{equation*}
 \pi_b(\omega):=
 \begin{cases}
\left \{     \theta \in S^2
     \!\mid \omega \cdot \theta =b
\right \},  \quad &b \in [-1,1];\\
\emptyset ,     \quad & |b| >1;
    \end  {cases} .
\end{equation*}
The set $ \pi_b(\omega)$ is a parallel on the unit sphere with the
North Pole $\omega,$ the length of the parallel is equal to $2 \pi
\sqrt{1-b^2};$  $ \pi_0(\omega)$ is the equator; $ \pi_{\pm
1}(\omega)= \pm \omega$. For a function $g$ on $S^2$, denote by
\begin{equation}\label{Eq Funk Mink}
 [g]_b(\omega):=
 \begin{cases}
   \frac{1}{2\pi \sqrt{1-b^2}}
   \int\limits_{\pi_b(\omega)} g(\theta)\, d\theta ,
 \quad
 & b \in (-1,1);\\
 g(-\omega) ,
  &b= -1;\\
 g(\omega) ,
  &b= 1;\\
  0,   &|b|>1;
    \end  {cases}
    \end{equation}
the mean value of $g$ on the parallel. The following result is
established in \cite{BV_3}.
\begin{Lemma}
Let a control $f$ and its derivative $f_\tau$ belong to $\mathscr
F$. Then the representation
\begin{equation}\label{Eq repres u^f_0}
u^f_0(x,t)=
 \frac{1}{2 \pi}
\int\limits_{ S^2} f_{\tau}  (t+r\,\omega \cdot \theta,\theta)
 \, d\theta
 + \frac{1}{r}\, [f(0,\cdot)]_{-  \frac{t}{r}}\, (\omega),\qquad
 x\in\Bbb R^3,\,\,\,t\leqslant 0
 \end{equation}
holds, where $r=|x|$,\,\, $\omega=\frac{x}{|x|}$;\,\, $a\cdot b$
is the standard inner product in $\Bbb R^3$, and $f_\tau$ is
extended to $\tau <0$ by zero.
\end{Lemma}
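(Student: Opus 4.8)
The plan is to realize the right-hand side of \eqref{Eq repres u^f_0} as a superposition of plane waves and to verify directly that it has the three defining properties of $u^f_0$, closing with a uniqueness statement. For a profile $\varphi=\varphi(\sigma,\theta)$ put $v(x,t):=\int_{S^2}\varphi(t+x\cdot\theta,\theta)\,d\theta$ (such $v$ exhaust the finite-energy solutions of the free wave equation, cf.\ \cite{LP}). Since $|\theta|=1$, each summand $\varphi(t+x\cdot\theta,\theta)$ solves $u_{tt}-\Delta u=0$, hence so does $v$; and since $t+x\cdot\theta\le t+|x|<0$ whenever $|x|<-t$ and $t<0$, the condition \eqref{Eq 20} for $v$ is equivalent to $\varphi(\cdot,\theta)$ being supported in $[0,\infty)$. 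The assumption $f,f_\tau\in\mathscr F$ makes $f$ absolutely continuous in $\tau$ for a.e.\ $\omega$, so extending $f$ by zero to $\tau<0$ creates a jump $f(0,\omega)$ at $\tau=0$ and the distributional $\tau$-derivative of the extension is $\{f_\tau\}+f(0,\cdot)\,\delta_{\tau=0}$, supported in $[0,\infty)$. We take $\varphi:=\tfrac1{2\pi}\bigl(\{f_\tau\}+f(0,\cdot)\,\delta_{\tau=0}\bigr)$; the two summands of \eqref{Eq repres u^f_0} are exactly the classical part and the $\delta$-part of the corresponding superposition.

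The rewriting of the $\delta$-part is elementary. Slicing $S^2$ by the parallels $b=\omega\cdot\theta={\rm const}$, the distribution $\delta(t+x\cdot\theta)=\delta(t+r\,\omega\cdot\theta)$ concentrates on the single parallel $\pi_{-t/r}(\omega)$, and by the very definition \eqref{Eq Funk Mink} one gets $\int_{S^2}g(\theta)\,\delta(t+r\,\omega\cdot\theta)\,d\theta=\tfrac{2\pi}{r}\,[g]_{-t/r}(\omega)$; with $g=\tfrac1{2\pi}f(0,\cdot)$ this is the second summand of \eqref{Eq repres u^f_0}. The classical part reproduces the first summand verbatim.

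The core step is to verify the incoming condition \eqref{Eq 30} for this $v$. Substitute $x=(-s+\tau)\,\omega$, so $r=|x|=-s+\tau$ and the phase is $t+x\cdot\theta=s\,(1-\omega\cdot\theta)+\tau\,\omega\cdot\theta$, and let $s\to-\infty$. For each fixed $\theta$ with $\omega\cdot\theta<1$ this phase tends to $-\infty$, so, $\varphi(\cdot,\theta)$ being supported in $[0,\infty)$, the integral localizes to a shrinking cap $\{\,\omega\cdot\theta>1-O(1/|s|)\,\}$ around $\theta=\omega$. Slicing again by $b=\omega\cdot\theta$ and introducing $\mu:=|s|\,(1-b)\in[0,2|s|]$, whose Jacobian $db=-d\mu/|s|$ is exact, turns $v\bigl((-s+\tau)\omega,s\bigr)$ into $\tfrac1{|s|}\int_0^{2\pi}\!\!\int_0^{2|s|}\varphi\bigl(\tau-\mu+o(1),\theta(\mu,\psi)\bigr)\,d\mu\,d\psi$ with $\theta(\mu,\psi)\to\omega$; multiplying by $s$, passing to the limit under the integral (by uniform integrability, using $f_\tau\in L_2$), and integrating out the antiderivative of $\varphi$ produces $f(\tau,\omega)$ as the limit, which establishes \eqref{Eq 30}. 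Finally, the free problem \eqref{Eq 10}--\eqref{Eq 30} has at most one solution in the natural finite-energy class — two solutions with the same incoming amplitude and the same vanishing in the back light cone coincide by the energy estimate and finite speed of propagation for $\partial_t^2-\Delta$ — so $v=u^f_0$. To cover the stated regularity one first runs the whole argument for smooth compactly supported $f$ and then passes to the general case by density, using boundedness of $f\mapsto u^f_0$.

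The genuine obstacle is the rigor of the localization-and-limit in the verification of \eqref{Eq 30}: bounding the contribution of the part of $S^2$ away from $\omega$, justifying the change of variables and the exchange of limit and integral for $f_\tau$ merely in $L_2$, and keeping track of the constant $\tfrac1{2\pi}$. A secondary technical point is to give precise meaning, at this regularity, to the $\delta$-superposition and to the sense in which the right-hand side of \eqref{Eq repres u^f_0} solves the wave equation, both of which are handled cleanly by the smoothing-and-density device just mentioned.
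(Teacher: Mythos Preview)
The paper does not give its own proof of this lemma: immediately before the statement it writes ``The following result is established in \cite{BV_3}'' and then uses the representation as input. So there is no in-paper argument to compare yours against.

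Your strategy is the natural Lax--Phillips route and is sound in outline: recognize the right-hand side as the plane-wave superposition $\int_{S^2}\varphi(t+x\cdot\theta,\theta)\,d\theta$ with distributional profile $\varphi=\tfrac{1}{2\pi}\partial_\tau\tilde f$ (where $\tilde f$ is the zero-extension of $f$), observe that this automatically gives \eqref{Eq 10} and \eqref{Eq 20}, then verify \eqref{Eq 30} by localizing near $\theta=\omega$ and invoke uniqueness for the free incoming problem. The identification of the $\delta$-part with the second summand is correct: slicing by $b=\omega\cdot\theta$ one has $d\theta=db\,d\psi$ and $\int_0^{2\pi}g(\theta(b,\psi))\,d\psi=2\pi\,[g]_b(\omega)$, so $\int_{S^2}g(\theta)\,\delta(t+r\,\omega\cdot\theta)\,d\theta=\tfrac{2\pi}{r}\,[g]_{-t/r}(\omega)$, which with $g=\tfrac{1}{2\pi}f(0,\cdot)$ reproduces the Funk--Minkowski term.

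One caution on the limiting step: keep careful track of the sign. With $\mu=|s|(1-b)$ you pick up $db=-d\mu/|s|$, and the outer factor is $s=-|s|$; after integrating $\int_0^\infty\varphi(\tau-\mu,\omega)\,d\mu$ you should check the overall sign against the normalization in \eqref{Eq 30}. A quick sanity check with the spherically symmetric control $f(\tau,\omega)=e^{-\tau}$, for which the formula yields $u_0^f(x,t)=r^{-1}e^{-(r+t)}$ on $\{r+t>0\}$, makes this concrete and is worth doing before you declare that the limit is $f(\tau,\omega)$ rather than $-f(\tau,\omega)$. This is a bookkeeping issue (possibly tied to a sign convention between the paper and \cite{BV_3}), not a defect of your method. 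Your smoothing-and-density device is the right way to make both the $\delta$-superposition and the limit exchange rigorous at the stated regularity.
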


Note a peculiarity of this representation: the summands in
(\ref{Eq repres u^f_0}) may be not square integrable in $\Bbb R^3$
but the sum does belong to $\mathscr H$  \cite{BV_3}.  At the same
time, both summands are the solutions of the wave equation
(\ref{Eq 10}).
\medskip

\noindent$\bullet$\,\,\, An important fact used in what follows is
that the hyperbolic problem (\ref{Eq 10})--(\ref{Eq 30}) is well
posed for {\it any} control provided $f, f_\tau\in L_2^{\rm
loc}(\Sigma)$, regardless of its behavior at $\tau\to\infty$,
whereas the corresponding solution $u^f_0(\cdot,t)\in L_2^{\rm
loc}(\Bbb R^3),\,t\leqslant 0$ is given by the same formula
(\ref{Eq repres u^f_0}). We say such $f$'s to be {\it admissible}
and, if otherwise is not specified, deal with controls of this
class.

In particular, the solution $u^f_0$ is well defined for the {\it
polynomial controls}
\begin{align*}
\notag & \mathscr P\,:=\,\\
&
\left\{p^l_{jm}(\tau,\omega)=\tau^{l-2j}\,Y_l^m(\omega)\,\bigg|\,\,
l=0,1,\dots;\,\,0\leqslant j\leqslant
\left[\frac{l}{2}\right];\,\, -l\leqslant m\leqslant l\right\},
\end{align*}
where $Y^m_l$ are the standard spherical harmonics, $[...]$ is the
integer part. In contrast to them, we say the controls belonging
to $\mathscr F$ to be {\it ordinary}. In what follows we operate
with controls of the class $\mathscr F\dotplus\mathscr P$.

There are two properties that distinguish the class $\mathscr P$.
First, for $p\in\mathscr P$ the waves $u_0^p$ are expressed via
controls $p$ in explicit form \cite{BV_2,BV JIIPP centrifug pot}.
Second, these waves vanish at $t=0$ and are odd w.r.t. time: the
relations
\begin{equation}\label{Eq u^p(0)=0}
u^p_0(\cdot,0)\,=\,0,\quad u^p_0(\cdot,t)=-u^f_0(\cdot,-t),\qquad
p\in\mathscr P
\end{equation}
hold \cite{BV_2,BV_3}. Note that since $W_0$ is a unitary operator
from $\mathscr F$ to $\mathscr H$, for the ordinary controls $f$
the relations (\ref{Eq u^p(0)=0}) are impossible.
\begin{Remark}
Note in advance that, owing to the property (\ref{Eq u=u0 outside
D}), the perturbed solutions $u^f$ are also well defined for
controls $f\in\mathscr F\dotplus\mathscr P$. This fact is
substantially used in the proof of Lemma \ref{L perturbed C-form}.
\end{Remark}
\medskip

\noindent$\bullet$\,\,\ The well-known fact of the hyperbolic PDE
theory is that the singular controls initiate the singular waves,
with the singularities propagate along the characteristics. The
relations, which express singularities of waves via singularities
of controls, are usually called the geometrical optics (GO)
formulas. The following result is of this kind. We denote
$S^2_R:=\{x\in \Bbb R^3\,|\,\,|x|=R\}$ and recall that
$B_R(x)=\{y\in \Bbb R^3\,|\,\,|x-y|<R\}$.
\smallskip

Take an admissible control $f$ provided $f(0,\cdot)\not=0$. So,
being extended to $\tau<0$ by zero, $f$ has a break of its
amplitude at $\tau=0$. As a consequence, the wave $u^f_0$, which
is supported in the domain $\{(x,t)\,|\,\,|x|\geqslant -t\}$,
turns out to be discontinuous near the characteristic cone
$\{(x,t)\,|\,\,t< 0,\,\,|x|= -t\}$. In other words, for any $t<0$
the wave $u_0^f(\cdot,t)$ has a break of amplitude at its forward
front $S^2_{-t}$. The values of the breaks are related as follows.
Denote
$$
s^p_+\,:=\,\begin{cases} 0, & s<0;\\
s^p, &s\geqslant 0;
\end{cases};\qquad p\geqslant 0,
$$
so that $s^0_+$ is the Heaviside function.
\begin{Lemma}\label{L ampl breaks unperturb}
Let $f\in C^2_{\rm loc}(\Sigma)$; fix a $t<0$ and a (small)
$\delta>0$. The GO-representation
\begin{equation}\label{Eq GO 0}
u^f_0(x,t)\,=\,\frac{f(0,\omega)}{r}\,(r+t)^0_+
\,+\,w_0(x,t,\delta)\,(r+t)^1_+,\qquad 0\leqslant
r+t\leqslant\delta
\end{equation}
holds, where $r=|x|$, $\omega=\frac{x}{|x|}$, and the estimate
$|w_0|\leqslant c_0\,\|f\|_{C^2([0,\delta]\times S^2)}$ is valid
uniformly w.r.t. $x$ and $t$.
\end{Lemma}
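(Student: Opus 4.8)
The plan is to derive \eqref{Eq GO 0} directly from the explicit representation \eqref{Eq repres u^f_0}, isolating the discontinuous contribution and controlling the remainder by Taylor expansion. First I would recall that, since $f\in C^2_{\rm loc}(\Sigma)$ is extended by zero to $\tau<0$, the only source of discontinuity in \eqref{Eq repres u^f_0} is the term $\frac{1}{r}[f(0,\cdot)]_{-t/r}(\omega)$: the Minkowski mean $[f(0,\cdot)]_b(\omega)$ is supported in $b\in[-1,1]$, i.e. in $|t|\leqslant r$, and it jumps at $b=-1$ (the value $f(0,-(-\omega))$... one must be careful with the pole convention) — actually at $b=1$, corresponding to $-t/r=1$, i.e. $r=-t$, the parallel $\pi_1(\omega)$ degenerates to the single point $\omega$ and $[f(0,\cdot)]_1(\omega)=f(0,\omega)$. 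So near the forward front $r=-t$, i.e. for small $r+t\geqslant 0$, this term contributes $\frac{1}{r}f(0,\omega)$ to leading order, which is exactly the coefficient of $(r+t)^0_+$ in \eqref{Eq GO 0}, while for $r+t<0$ it vanishes.

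Next I would handle the two pieces separately. For the integral term $\frac{1}{2\pi}\int_{S^2}f_\tau(t+r\,\omega\cdot\theta,\theta)\,d\theta$: since $f$ is extended by zero, $f_\tau$ carries a Dirac mass $f(0,\cdot)\delta(\tau)$ plus an $L^\infty_{\rm loc}$ part; integrating the Dirac mass over $\theta\in S^2$ against the constraint $t+r\,\omega\cdot\theta=0$ reproduces another multiple of $\frac{1}{r}[f(0,\cdot)]_{-t/r}(\omega)$, so the genuinely singular parts of the two summands combine into the stated jump $\frac{f(0,\omega)}{r}(r+t)^0_+$; the leftover smooth part of $f_\tau$ integrates to something $C^1$ in $(x,t)$, hence $O(r+t)$ near the front after subtracting its boundary value. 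For the mean-value term with the singular factor removed, I would write $\frac{1}{r}\bigl([f(0,\cdot)]_{-t/r}(\omega)-f(0,\omega)\bigr)$ and show this is Lipschitz in $b=-t/r$ up to $b=1$ with constant controlled by $\|f(0,\cdot)\|_{C^1(S^2)}\leqslant\|f\|_{C^2}$; this uses the smoothness of the spherical average and the fact that $\sqrt{1-b^2}$ in the denominator is compensated by the shrinking length $2\pi\sqrt{1-b^2}$ of the parallel, so no singularity at $b=1$. Collecting terms, everything beyond $\frac{f(0,\omega)}{r}(r+t)^0_+$ is continuous and vanishes linearly at $r+t=0$, which is the factor $(r+t)^1_+$ with a bounded coefficient $w_0$.

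The final step is the uniform bound $|w_0|\leqslant c_0\|f\|_{C^2([0,\delta]\times S^2)}$. Here I would differentiate the explicit expressions once more in the direction transverse to the front, i.e. estimate $\partial_{(r+t)}$ of the regular remainder, which brings down one derivative of $f$; the second derivative is needed because the integral term $\int_{S^2}f_\tau(t+r\,\omega\cdot\theta,\theta)\,d\theta$ must be differentiated in $r$ (or $t$) to extract the linear vanishing, and its integrand already contains $f_\tau$. Restricting to the thin slab $0\leqslant r+t\leqslant\delta$ with $\delta$ small keeps $r$ bounded away from $0$ (since $r\geqslant -t>0$ for the fixed $t<0$), so the factors $\frac1r$, $\frac1{r^2}$ are harmless and absorbed into $c_0$, which may depend on $t$ and $\delta$.

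The main obstacle I anticipate is the careful bookkeeping of how the Dirac-mass part of $f_\tau$ and the jump of the Minkowski mean conspire to produce \emph{exactly} the coefficient $f(0,\omega)/r$ (and not, say, twice or half of it): one has to track the constant $\frac{1}{2\pi}$, the normalization $\frac{1}{2\pi\sqrt{1-b^2}}$ in $[\cdot]_b$, and the Jacobian of the change of variables $\theta\mapsto\omega\cdot\theta$ on $S^2$ (which produces precisely $2\pi$), so that the $\delta(\tau)$ contribution from the integral term coincides with the mean-value term and they add rather than being already counted. This is a routine but delicate computation; everything else is standard Taylor estimation once the singular part is peeled off.
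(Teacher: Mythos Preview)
Your plan contains a genuine misreading of \eqref{Eq repres u^f_0} that would make the leading coefficient come out wrong. In that formula, ``$f_\tau$ is extended to $\tau<0$ by zero'' means the \emph{classical} derivative of $f$ on $[0,\infty)$ is set to zero for $\tau<0$; it is \emph{not} the distributional derivative of the zero-extension of $f$, and hence carries no Dirac mass. The boundary contribution you are looking for is already isolated as the second summand $\frac{1}{r}[f(0,\cdot)]_{-t/r}(\omega)$. If you insert a Dirac term $f(0,\cdot)\delta(\tau)$ into the integral, the coarea computation on $S^2$ (with $d\theta=db\,d\phi$, $b=\omega\cdot\theta$) gives
\[
\frac{1}{2\pi}\int_{S^2} f(0,\theta)\,\delta(t+r\,\omega\cdot\theta)\,d\theta=\frac{1}{r}\,[f(0,\cdot)]_{-t/r}(\omega),
\]
i.e.\ an exact duplicate of the second summand. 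Your ``combine'' step would then yield $\frac{2f(0,\omega)}{r}$, precisely the factor-of-two error you flagged as a worry.

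The paper's argument avoids all of this. For the integral term, note that the (classical) integrand $f_\tau(t+r\,\omega\cdot\theta,\theta)$ is nonzero only on the cap $\{\theta:\omega\cdot\theta\geqslant -t/r\}=\{\theta:\omega\cdot\theta\geqslant 1-(r+t)/r\}$, whose area is of order $r+t$; a bounded integrand ($\leqslant\|f\|_{C^1}$) over a set of measure $O(r+t)$ gives the $(r+t)^1_+$ bound directly, with no differentiation and no distributions. For the mean-value term, one writes $-t/r=1-(r+t)/r$ and Taylor-expands $[f(0,\cdot)]_{1-\varepsilon}(\omega)$ about $\varepsilon=0$: the linear term $\nabla_\theta f(0,\omega)\cdot(\theta-\omega)$ integrates to zero over the parallel $\pi_{1-\varepsilon}(\omega)$ by symmetry, and the quadratic remainder is $O(\varepsilon)$ with constant $\leqslant c\|f(0,\cdot)\|_{C^2(S^2)}$. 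This is also why your claim that $\|f(0,\cdot)\|_{C^1}$ suffices for the Lipschitz bound on $[f(0,\cdot)]_b$ near $b=1$ is too optimistic: on $\pi_{1-\varepsilon}(\omega)$ one has $|\theta-\omega|\sim\sqrt{\varepsilon}$, so only the second-order Taylor remainder is $O(\varepsilon)$; $C^2$ is genuinely used here.
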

\begin{proof}
\noindent{\bf 1.}\,\,\,Take a $g\in C^2(S^2)$ and $b=1-\delta$
with a small $\delta>0$. Representing by Tailor-Lagrange
\begin{equation}\label{Eg Tailor}
g(\theta)\,=\,g(\omega)+\nabla_\theta
\,g\,(\omega)\cdot(\theta-\omega)+\left(B(\omega,\theta)(\theta-\omega)\right)\cdot(\theta-\omega)
\end{equation}
(here $\theta$ and $\omega$ are considered as vectors in $\Bbb
R^3\supset S^2$) and integrating over the (small) parallel
$\pi_{1-\delta}(\omega)$, in accordance with (\ref{Eq Funk Mink}),
we get
\begin{equation}\label{Eq g+tilde h delta}
[g]_{1-\delta}(\omega)=
\frac{1}{|\pi_{1-\delta}(\omega)|}\,\int_{\pi_{1-\delta}(\omega)}g(\theta)\,d\theta\,\,\overset{(\ref{Eg
Tailor})}=\,\,g(\omega)+h(\omega,\delta)\,\delta
\end{equation}
with $h$ obeying $|h|\leqslant c\,\|g\|_{C^2(S^2)}$. Note that the
first-order term with $\nabla_\theta\,g$ vanishes in course of
integration over the parallel.
\smallskip

\noindent{\bf 2.}\,\,\,Applying (\ref{Eq g+tilde h delta}) to the
second summand in (\ref{Eq repres u^f_0}), we have
\begin{align}\label{Eg second summand}
\notag & \frac{1}{r}\, [f(0,\cdot)]_{-\frac{t}{r}}\,
(\omega)=\frac{1}{r}\, [f(0,\cdot)]_{1-\frac{r+t}{r}}\,
(\omega)=\frac{f(0,\omega)+ h(r,\omega,t,\delta)(r+t)}
{r}=\\
& =\frac{f(0,\omega)}{r}\,(r+t)^0_+ +
w_1(r,t,\omega,\delta)\,(r+t)^1_+,\qquad 0\leqslant
r+t\leqslant\delta
\end{align}
with $|w_1|\leqslant c_1\,\|f(0,\cdot)\|_{C^2(S^2)}$.
\smallskip

\noindent{\bf 3.}\,\,\,As it easily follows from (\ref{Eq repres
u^f_0}), the values $u^f_0\!\mid_{0\leqslant r+t\leqslant\delta}$
are determined by the values $f\!\mid_{0\leqslant
\tau\leqslant\delta}$ (does not depend on $f\!\mid_{
\tau>\delta}$). Estimating the first summand in (\ref{Eq repres
u^f_0}) for $0\leqslant r+t\leqslant\delta$, one has
\begin{align}
\notag &
\bigg|\int_{S^2}f_\tau(t+r\omega\cdot\theta,\theta)\,d\theta\bigg|\leqslant
\|f\|_{C^1([0,\delta]\times S^2)}\,{\rm mes\,}\{\theta\in
S^2\,|\,\,t+r\omega\cdot\theta\geqslant 0\}=\\
\notag & = \|f\|_{C^1([0,\delta]\times S^2))}\,{\rm
mes\,}\left\{\theta\in S^2\,|\,\,\cos\theta\geqslant
1-\frac{r+t}{r}\right\}.
\end{align}
One can easily verify that the measure is an infinitesimal of the
order $r+t$, which implies
\begin{align}\label{Eg first summand}
\frac{1}{2\pi}\,\int_{S^2}f_\tau(t+r\omega\cdot\theta,\theta)\,d\theta
=\,w_2(r,t,\omega,\delta)\,(r+t)^1_+,\qquad 0\leqslant
r+t\leqslant\delta,
\end{align}
with $|w_2|\leqslant c_2\,\|f\|_{C^1([0,\delta]\times S^2)}$.
\smallskip

\noindent{\bf 4.}\,\,\,Joining (\ref{Eg second summand}) and
(\ref{Eg first summand}), we arrive at (\ref{Eq GO 0}).
\end{proof}

\subsubsection*{Perturbed system}

\noindent$\bullet$\,\,\ Return to the system (\ref{Eq
1ext})--(\ref{Eq 3ext}) and recall that $q\in L_\infty(\Bbb R^3)$
and ${\rm supp\,}q\subset B_a(0)$ holds. Recall the coincidence of
solutions (\ref{Eq u=u0 outside D}) outside the influence domain
$D$. In particular, $u^f=u^f_0$ holds for $t\leqslant -a$. The
latter enables to present (\ref{Eq 1ext})--(\ref{Eq 3ext}) in the
equivalent problem
\begin{align*}
& (u-u_0)_{tt}-\Delta(u-u_0)\,=\,-\,qu && {\rm in}\,\,\Bbb
R^3\times (-a,0);\\
& (u-u_0)\!\mid_{t<-a}=0,
\end{align*}
and then reduce it to the integral equation by the Kirchhoff
formula
\begin{align}
\notag &
u^f(x,t)\,=\,u^f_0(x,t)\,-\,\frac{1}{4\pi}\,\int_{B_{t+a}(x)}\frac{q(y)\,u^f(y,
t+a-|x-y|)}{|x-y|}\,dy=\\
\label{Eq Kirchhoff} & =\,u^f_0(x,t)\,-\,(I
u^f)(x,t),\qquad(x,t)\in D.
\end{align}
This equation, in turn, is reduced to the family of the equations
of the same form in the spaces $L_2(D^b)$, where $D^b:=\{(x,t)\in
D\,|\,\, t\leqslant b\},\,\,a< b<\infty$.

Fix a \,\,\,$b>a$. Assuming $q\in L_\infty(\Bbb R^3)$, operator
$I$ acts in $L_2(D^b)$, is compact (see \cite{BV_3}, (2.4)), and
possesses a continuous nest of the invariant subspaces of
functions supported in $D^\eta$ with $\eta\leqslant b$. As such,
$I$ is a Volterra operator \cite{Brod}. Hence, the operator
$\mathbb I-I$ is boundedly invertible in each $L_2(D^b)$ and we
have $u^f=(\Bbb I-I)^{-1}u^f_0\in L_2(D^b)$. For a locally
square-summable solution $u^f$, the integral $Iu^f$ depends on
$(x,t)\in D$ continuously. Therefore,  if $f,f_\tau \in C_{\rm
loc}(\Sigma)$, then $u^f_0\in C_(D^b)$ holds by (\ref{Eq repres
u^f_0}). Hence, $u^f=u^f_0-Iu^f\in C(D^b)$ holds.

By arbitrariness of $b$, we arrive at $u^f\in C_{\rm loc}(D)$.
\medskip

\noindent$\bullet$\,\,\, The Kirchhoff formula is a relevant tool
for the GO-analysis. Treating the behavior of the wave $u^f$ near
its forward front, we already have the GO-representation (\ref{Eq
GO 0}) for the summand $u^f_0$ in (\ref{Eq Kirchhoff}) and, thus,
it remains to estimate the contribution of the second summand. Let
us do it.
\smallskip

Recall the assumption $f\in C^2_{\rm loc}(\Sigma)$, which provides
the continuity of $u^f_0$ and then the continuity of $u^f$. As
above, we denote $r=|x|,\,\,\omega=\frac{x}{|x|}$.

Fix a $t<0$. By (\ref{Eq Kirchhoff}), the integral $Iu^f$ is taken
over the (3-dimensional) cone $C_{x,t}:=\{(y,s)\,|\,\,-a\leqslant
s\leqslant t,\,\,|x-y|=t-s\}$. In the mean time, $u^f$ vanishes
for $r<-t$. Therefore, in fact the integral is taken over the part
$\dot C_{x,t}:=C_{x,t}\cap\{(y,s)\,|\,\,|y|\geqslant-t\}$. When
the top $(r\omega,t)$ of $C_{x,t}$ approaches to the point
$(|t|\,\omega,t)$, which lies at the forward front of
$u^f(\cdot,t)$ (i.e., when $r+t\to 0$), this part shrinks to the
segment $l_{x,t}$ of the straight line that connects the point
$(|t|\,\omega,t)$ with the point $(a\,\omega,-a)$ in $\Bbb R^4$.

Then we represent $\dot C_{x,t}=\dot C'_{x,t}\cup\dot C''_{x,t}$,
where $\dot C'_{x,t}:=\{(x',t')\in \dot
C_{x,t}\,|\,\,t-(r+t)\leqslant t'\leqslant t\}$ is the (small)
cone of the height $\frac{r+t}{2}$, and $\dot C''_{x,t}=\dot
C_{x,t}\setminus \dot C'_{x,t}$ is a rest. The part $\dot
C'_{x,t}$ contains the top $(x,t)$, where the integrant of $Iu^f$
has a singularity $\frac{1}{|x-y|}$. As is easy to check,
$\int_{\dot C'_{x,t}}$ is of the order $r+t$.

In the mean time, the part $\dot C''_{x,t}$ is projected along the
generating straight lines of the cone $C_{x,t}$ onto the domain
$B_{t+a}(x)\setminus B_a(0)\subset \Bbb R^3$, which is of the
transversal size $|\,r\omega-|t|\,\omega\,|=r+t\to 0$. By the
latter, we have ${\rm mes\,}[B_{t+a}(x)\setminus B_a(0)]\sim
r+t\to 0$. Thus, the measure of $\dot C''_{x,t}$ is an
infinitesimal of the order $r+t$. As a result, the integral
$\int_{\dot C''_{x,t}}$ vanishes as $r+t$.

Summarizing the above considerations, we obtain the representation
\begin{equation}\label{Eq estim I}
(Iu^f)(x,t)\,=\,\dot w(x,t,\delta)\,(r+t)^1_+,\qquad 0\leqslant
r+t\leqslant \delta,
\end{equation}
where $\dot w$ obeys
\begin{align*}
&|\dot w|\leqslant c_1\,\|q\|_{L_\infty(\Bbb
R^3)}\,\|u^f\|_{C(D^a)}\leqslant
c_2 \,\|q\|_{L_\infty(\Bbb R^3)}\,\|u^f_0\|_{C(D^a)}\leqslant\\
& \leqslant\,c_3\,\|q\|_{L_\infty(\Bbb
R^3)}\|f\|_{C^1([0,\delta]\times S^2)}
\end{align*}
with the relevant constants. The latter inequality follows from
the fact that $u^f_0\!\mid_{0\leqslant r+t\leqslant\delta}$ is
determined by $f\!\mid_{0\leqslant \tau\leqslant\delta}$.

Note that, analyzing the shrinking of $\dot C_{x,t}\to l_{x,t}$ in
more detail, under additional assumptions on the smoothness of
$q$, one can derive more precise classical GO-formulas (see, e.g.,
Appendix in \cite{BKach}).
\smallskip

\noindent$\bullet$\,\,\ Joining (\ref{Eq GO 0}) with (\ref{Eq
Kirchhoff}) and (\ref{Eq estim I}), and denoting $w:=w_0-\dot w$,
we arrive at the following GO-representation for the perturbed
waves.
\begin{Lemma}\label{L ampl breaks perturb}
Let $f\in C^2_{\rm loc}(\Sigma)$; fix a $t<0$ and a (small)
$\delta>0$. The representation
\begin{equation}\label{Eq GO perturb}
u^f(x,t)\,=\,\frac{f(0,\omega)}{r}\,(r+t)^0_+
\,+\,w(x,t,\delta)\,(r+t)^1_+,\qquad 0\leqslant r+t\leqslant\delta
\end{equation}
holds, where $r=|x|$, $\omega=\frac{x}{|x|}$, and the estimate
$|w|\leqslant c\,\|q\|_{L_\infty(\Bbb
R^3)}\,\|f\|_{C^2([0,\delta]\times S^2)}$ is valid uniformly
w.r.t. $x$ and $t$.
\end{Lemma}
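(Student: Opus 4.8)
The plan is to combine the two GO-representations already derived in the excerpt—formula \eqref{Eq GO 0} for the unperturbed wave $u^f_0$ and the estimate \eqref{Eq estim I} for the Kirchhoff correction $Iu^f$—via the Kirchhoff identity \eqref{Eq Kirchhoff}. Concretely, I would start from $u^f(x,t)=u^f_0(x,t)-(Iu^f)(x,t)$ on $D$ and substitute the right-hand sides of \eqref{Eq GO 0} and \eqref{Eq estim I}, valid on the strip $0\leqslant r+t\leqslant\delta$. The leading singular term $\tfrac{f(0,\omega)}{r}(r+t)^0_+$ comes entirely from $u^f_0$, since $Iu^f$ contributes only an $O((r+t)^1_+)$ term; collecting the two $(r+t)^1_+$ coefficients and setting $w:=w_0-\dot w$ yields exactly \eqref{Eq GO perturb}.

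The one genuine point that needs care is the \emph{validity domain} of the substitution. The representation \eqref{Eq GO 0} holds for $0\leqslant r+t\leqslant\delta$ with any small $\delta>0$, but \eqref{Eq estim I} was derived by analyzing the cone $\dot C_{x,t}$, which requires $t<0$ and, implicitly, that the shrinking cone lies inside the region where $u^f$ is already known to be continuous; the estimate was stated on $0\leqslant r+t\leqslant\delta$ with $\dot w$ bounded in terms of $\|u^f\|_{C(D^a)}$. So I would first fix $t<0$, then choose $\delta$ small enough that both strips are available and that the estimates $\|u^f\|_{C(D^a)}\leqslant c\,\|u^f_0\|_{C(D^a)}\leqslant c'\,\|f\|_{C^2([0,\delta]\times S^2)}$ (which rely on the locality $u^f_0\!\mid_{0\leqslant r+t\leqslant\delta}$ depending only on $f\!\mid_{[0,\delta]}$, plus bounded invertibility of $\mathbb I-I$) hold on that strip. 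With $\delta$ so fixed, the addition of the two representations is purely algebraic.

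For the final estimate on $w$, I would simply observe $|w|\leqslant|w_0|+|\dot w|$ and invoke the bounds already in hand: $|w_0|\leqslant c_0\|f\|_{C^2([0,\delta]\times S^2)}$ from Lemma \ref{L ampl breaks unperturb}, and $|\dot w|\leqslant c_3\|q\|_{L_\infty(\mathbb R^3)}\|f\|_{C^1([0,\delta]\times S^2)}$ from the displayed chain of inequalities following \eqref{Eq estim I}. Since $\|f\|_{C^1}\leqslant\|f\|_{C^2}$ and the $w_0$-bound can be written with a harmless factor $(1+\|q\|_{L_\infty})$, both terms are dominated by $c\,\bigl(1+\|q\|_{L_\infty(\mathbb R^3)}\bigr)\|f\|_{C^2([0,\delta]\times S^2)}$; absorbing constants gives the stated uniform bound. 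The uniformity in $x$ and $t$ is inherited termwise, since each of the constituent estimates was already uniform on the strip.

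I do not expect any serious obstacle here—the lemma is essentially a bookkeeping step that packages \eqref{Eq GO 0} and \eqref{Eq estim I} into a single statement. If anything, the mild subtlety is making sure the constant in front of $\|u^f\|_{C(D^a)}$ coming from $\|(\mathbb I-I)^{-1}\|$ is uniform as $t\uparrow 0$, i.e.\ independent of the particular $D^b$; but this is guaranteed because $I$ is Volterra on each $L_2(D^b)$ and one works on the fixed compact piece $D^a$, so the resolvent norm there is a fixed finite constant depending only on $\|q\|_{L_\infty}$ and $a$.
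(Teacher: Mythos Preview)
Your proposal is correct and follows essentially the same route as the paper: the paper's proof is the single sentence immediately preceding the lemma, ``Joining (\ref{Eq GO 0}) with (\ref{Eq Kirchhoff}) and (\ref{Eq estim I}), and denoting $w:=w_0-\dot w$, we arrive at the following GO-representation,'' which is exactly the substitution you describe. Your additional remarks on the validity domain and the uniformity of $\|(\mathbb I-I)^{-1}\|$ are more detailed than anything the paper spells out, and your observation that the bound naturally carries a $(1+\|q\|_{L_\infty})$ factor rather than a bare $\|q\|_{L_\infty}$ is correct---the lemma's stated estimate is slightly imprecise in this respect.
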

Thus, if a control $f$ has an onset $f(0,\cdot)\not=0$ at $\tau=0$
then the wave $u^f$ has a jump on its forward front, the amplitude
of the jump being equal to $\frac{f(0,\cdot)}{r}$. The amplitude
grows, when $r\to 0$ that corresponds the focusing effect.
\smallskip

The coincidence of the forms of (\ref{Eq GO 0}) and (\ref{Eq GO
perturb}) reflects the well-known fact: the presence of the
zero-order term $q$ in the operator $\Delta-q$ that governs the
evolution of the perturbed system, does not influent on the
leading terms in GO-formulas.

\section{Inverse problem}

\subsubsection*{Amplitude integral}

The {\it amplitude integral} (AI) is an operator construction,
which is a basic tool for solving the dynamical inverse problems
by the BC-method \cite{B IP 97,BKach,B IPI 2022,BSim_AA_2}. It is
a generalization of the classical triangular truncation integral
\cite{Brod,GK}. Here a version of AI relevant to the acoustic
scattering problem, is provided.
\smallskip

\noindent$\bullet$\,\,\  Fix a $\xi> 0$. Recall that $X^\xi$ is
the projection in $\mathscr F$ on $\mathscr F^\xi$ that cuts off
controls on $\Sigma^\xi$.  By $Y^\xi$ we denote the projection in
$\mathscr H$ on $\mathscr H^\xi$ that cuts off functions on the
exterior of the ball $B_\xi(0)$.

Let $\Pi^\delta:=\{\tau_k\}_{k\geqslant
0},\,\,\,0=\tau_0<\tau_1<\tau_2<\dots\to\infty$ be a partition of
the semi-axis $0\leqslant \tau<\infty$ provided
$0<\tau_k-\tau_{k-1}\leqslant \delta$. The difference $\Delta
X^{\tau_k}:=X^{\tau_k}-X^{\tau_{k-1}}$ projects in $\mathscr F$ on
$\mathscr F^{\tau_k}\ominus\mathscr F^{\tau_{k-1}}$ and thus cuts
off controls on $\Sigma^{\tau_k}\setminus\Sigma^{\tau_{k-1}}$. The
difference $\Delta Y^{\tau_k}:=Y^{\tau_{k-1}}-Y^{\tau_k}$ projects
in $\mathscr H$ on $\mathscr H^{\tau_{k-1}}\ominus\mathscr
H^{\tau_{k}}$, i.e., cuts off functions on the spherical layer
$B_{\tau_{k}}(0)\setminus B_{\tau_{k-1}}(0)$. Note the evident
orthogonality relations:
\begin{align}\label{Eq orthogonal}
\notag &\Delta X^{\tau_k}\Delta X^{\tau_l}=\Bbb O_\mathscr
F,\,\,\,\Delta
Y^{\tau_k}\Delta Y^{\tau_l}=\Bbb O_\mathscr H,\quad k\not= l;\\
& \Delta X^{\tau_k}X^{\tau_l}=\Bbb O_\mathscr F,\,\,\,\Delta
Y^{\tau_k}Y^{\tau_l}=\Bbb O_\mathscr H,\quad l\leqslant k-1.
\end{align}

Recall that $W$ is a control operator of the perturbed system
(\ref{Eq 1})--(\ref{Eq 3}) and introduce the sum
\begin{align}\label{Eq AI sums}
A_{\Pi^\delta}\,:=\,\sum\limits_{k\geqslant 1}\Delta
Y^{\tau_k}W\,\Delta X^{\tau_k},
\end{align}
which is an operator from $\mathscr F$ to $\mathscr H$, well
defined on the compactly supported controls. By the orthogonality
(\ref{Eq orthogonal}) we have
\begin{align*}
& \|A_{\Pi^\delta}f\|_\mathscr H^2=\left\|\sum\limits_{k\geqslant
1}\Delta Y^{\tau_k}W\,\Delta X^{\tau_k}f\right\|^2_\mathscr
H=\sum\limits_{k\geqslant 1}\|W\,\Delta
X^{\tau_k}f\|_\mathscr H^2\leqslant\\
& \leqslant \|W\|^2\sum\limits_{k\geqslant 1}\|\Delta
X^{\tau_k}f\|_\mathscr F^2= \|W\|^2\,\|f\|^2_\mathscr F,
\end{align*}
so that the sum (\ref{Eq AI sums}) obeys
$\|A_{\Pi^\delta}\|\leqslant\|W\|$ and, hence, is a bounded
operator.
\smallskip

\noindent$\bullet$\,\,\  As is easy to check, the map $A:\mathscr
F\to\mathscr H$,
\begin{equation*}
(Af)(x)\,:=\,\frac{f(r,\omega)}{r},\qquad x=r\omega\in\Bbb R^3
\end{equation*}
is a unitary operator. Its adjoint $A^*=A^{-1}:\mathscr
H\to\mathscr F$ is of the form
\begin{equation}\label{Eq A^*}
(A^*y)(\tau,\omega)\,:=\,\tau\,y(\tau\omega),\qquad (\tau,\omega)
\in \Sigma.
\end{equation}
\begin{Theorem}\label{T AI}
The relation
\begin{equation}\label{Eq AI}
A\,=\,s\text{--}\lim_{\delta\to 0}A_{\,\Pi^\delta}
\end{equation}
holds.
\end{Theorem}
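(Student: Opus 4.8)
The plan is to establish the strong limit $(\ref{Eq AI})$ by first proving it on a dense set of controls — the $C^2_{\rm loc}$ controls supported on a finite interval — where the geometrical optics representation $(\ref{Eq GO perturb})$ of Lemma \ref{L ampl breaks perturb} applies, and then extending to all of $\mathscr F$ by the uniform bound $\|A_{\Pi^\delta}\|\leqslant\|W\|$ together with $\|A\|=1\leqslant\|W\|$. The core computation is to evaluate $\Delta Y^{\tau_k}W\Delta X^{\tau_k}f$ for such an $f$. The control $g_k:=\Delta X^{\tau_k}f$ is supported on $\Sigma^{\tau_k}\setminus\Sigma^{\tau_{k-1}}$, so (after the delay identity $(\ref{Eq Delay Rel})$, which lets us treat it as a control with onset at $\tau=\tau_{k-1}$) its wave $u^{g_k}(\cdot,0)=Wg_k$ has a jump on the sphere $S^2_{\tau_{k-1}}$ with amplitude $\frac{g_k(\tau_{k-1},\omega)}{\tau_{k-1}}$ by $(\ref{Eq GO perturb})$; but then $\Delta Y^{\tau_k}$ cuts this wave off to the thin spherical layer $B_{\tau_k}(0)\setminus B_{\tau_{k-1}}(0)$, which has width $\leqslant\delta$ and on which the wave is, to leading order, the constant jump profile $\frac{f(\tau_{k-1},\omega)}{r}$ plus an $O(r-\tau_{k-1})=O(\delta)$ correction whose $C^2$-norm is controlled by $\|f\|_{C^2}$ and $\|q\|_{L_\infty}$.

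The key estimate is therefore that $\Delta Y^{\tau_k}W\Delta X^{\tau_k}f$ differs from $\Delta Y^{\tau_k}(Af)$ by a term whose $\mathscr H$-norm on the $k$-th layer is $O(\delta)\cdot\|f\|_{C^2([\tau_{k-1},\tau_k]\times S^2)}\cdot(\text{volume of layer})^{1/2}$; summing the squares over $k$ via orthogonality $(\ref{Eq orthogonal})$ gives $\|A_{\Pi^\delta}f-A^{\delta}f\|_{\mathscr H}^2\leqslant C\delta^2\|f\|_{C^2}^2$ where $A^\delta f:=\sum_k\Delta Y^{\tau_k}(Af)$ is a telescoping partial sum that converges to $Af$ as $\delta\to0$ (indeed $\sum_k\Delta Y^{\tau_k}=I-s\text{-}\lim Y^{\tau_k}=I$ strongly on $\mathscr H$, since $Y^{\tau_k}\to0$ strongly as the balls exhaust $\Bbb R^3$ — here it is essential that $Af$ has no mass at infinity, which holds because $Af\in\mathscr H$). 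Combining the two convergences yields $A_{\Pi^\delta}f\to Af$ for $f$ in the dense class.

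The main obstacle I expect is bookkeeping the leading-order GO term uniformly across all layers simultaneously: $(\ref{Eq GO perturb})$ is stated for a fixed $t<0$ and a single break at $\tau=0$, with the remainder bound involving $\|f\|_{C^2([0,\delta]\times S^2)}$, so one must re-run that lemma with the break placed at $\tau=\tau_{k-1}$ and with the cut-off slice $r\in[\tau_{k-1},\tau_k]$, checking that the remainder constant does not degrade as $\tau_{k-1}\to\infty$ (the focusing factor $1/r$ actually helps, making layers at large radius contribute less) and that the local-dependence property — $u^f\!\mid_{0\leqslant r+t\leqslant\delta}$ depends only on $f\!\mid_{[0,\delta]}$ — legitimately decouples the layers so that the per-layer $C^2$-norms sum to $\|f\|_{C^2}^2$ rather than accumulating. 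Once this uniformity is in hand, the density/boundedness extension is routine: for general $f\in\mathscr F$ pick $f_n\to f$ in the dense class, write $\|A_{\Pi^\delta}f-Af\|\leqslant\|A_{\Pi^\delta}(f-f_n)\|+\|A_{\Pi^\delta}f_n-Af_n\|+\|A(f_n-f)\|\leqslant 2\|W\|\,\|f-f_n\|+\|A_{\Pi^\delta}f_n-Af_n\|$, and let $\delta\to0$ then $n\to\infty$.
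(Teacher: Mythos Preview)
Your proposal is correct and follows essentially the same route as the paper: establish the limit on the dense class of compactly supported $C^2$ controls via the GO-representation (\ref{Eq GO perturb}) (shifted by the delay relation (\ref{Eq Delay Rel})) to see that $\Delta Y^{\tau_k}W\Delta X^{\tau_k}f$ equals $\frac{f(r,\omega)}{r}$ on the $k$-th layer up to an $O(r-\tau_{k-1})$ remainder, sum the squared errors by orthogonality to get a bound that vanishes with $\delta$, and then extend to all of $\mathscr F$ by the uniform bound $\|A_{\Pi^\delta}\|\leqslant\|W\|$. The only cosmetic difference is that the paper works with $u^{X^{\tau_{k-1}}f}$ rather than $u^{\Delta X^{\tau_k}f}$, but by the near-front locality these coincide on the layer; also note that for compactly supported $f$ your auxiliary sum $A^\delta f=\sum_k\Delta Y^{\tau_k}(Af)$ is in fact \emph{exactly} $Af$ (the telescoping terminates), and your remark about per-layer $C^2$-norms ``summing'' is not needed --- a single global $\|f\|_{C^2([0,T]\times S^2)}$ bound on each remainder suffices since only finitely many layers contribute.
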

\begin{proof}

\noindent{\bf 1.}\,\,\,Take an $f\in\mathscr F\cap C^2_{\rm
loc}(\Sigma)$. Fix a $\xi> 0$ and a (small) $\delta>0$. The cut
off control $X^\xi f$ has a break at $\tau=\xi$, its amplitude
(onset) being equal to $f(\xi,\cdot)$. Therefore, by (\ref{Eq GO
perturb}) and with regard to the delay relation (\ref{Eq Delay
Rel}), we have
\begin{equation*}
u^{X^\xi f}(x,t)=\frac{f(\xi,\omega)}{r}\,(r+t-\xi)^0_+
\,+\,w(x,t-\xi,\delta)\,(r+t-\xi)^1_+,\quad 0\leqslant
r+t-\xi\leqslant\delta.
\end{equation*}
Putting $t=0$ and representing $f(\xi,\omega)=f(r,\omega)+
w'(r,\xi)\,(r-\xi)$, we get
\begin{equation}\label{Eq auxil 2}
u^{X^\xi f}(x,0)=(Wf)(x)=\frac{f(r,\omega)}{r}\,
\,+\,w(x,-\xi,\delta)\,(r-\xi),\quad \xi\leqslant
r\leqslant\xi+\delta.
\end{equation}

\noindent{\bf 2.}\,\,\,For the above chosen $f$, by (\ref{Eq auxil
2}) the summands of $A_{\Pi^\delta}f$ are of the form
\begin{equation}\label{Eq auxil 3}
\left(\Delta Y^{\tau_k}W\,\Delta
X^{\tau_k}f\right)(x)=\begin{cases} \frac{f(r,\omega)}{r}
+w(x,-\tau_k,\delta)(r-\tau_{k-1}), & \tau_{k-1}\leqslant
r\leqslant\tau_k;\\
0, & \text{otherwise};
\end{cases}
\end{equation}
i.e., the $k$-th summand is supported in the layer
$B_{\tau_{k}}(0)\setminus B_{\tau_{k-1}}(0)$.
\smallskip

\noindent{\bf 3.}\,\,\, Assume in addition that $f$ is compactly
supported. Then the summation in $A_{\Pi^\delta}$ is implemented
from $k=1$ to a finite $N$. Represent
$$
Af=\sum\limits_{k=1,\dots,N}\Delta Y_k Af
$$
with the summands
\begin{equation}\label{Eq auxil 4}
\left(\Delta Y^{\tau_k}Af\right)(x)=\begin{cases}
\frac{f(r,\omega)}{r}, & \tau_{k-1}\leqslant
r\leqslant\tau_k;\\
0, & \text{otherwise};
\end{cases}
\end{equation}
supported in the layers $B_{\tau_{k}}(0)\setminus
B_{\tau_{k-1}}(0)$. Comparing (\ref{Eq auxil 3}) with (\ref{Eq
auxil 4}), we have
\begin{align*}
&\left\|\left(A-A_{\Pi^\delta}\right)f\right\|^2=\left\|\sum\limits_{k=1,\dots,N}w(\cdot,-\tau_k,\delta)(|\cdot|-\tau_{k-1})^1_+\,\right\|^2\leqslant\\
&\leqslant\underset{x,k}\sup\,|w(x,-\tau_k,\delta)|^2\sum\limits_{k=1,\dots,N}(\tau_k-\tau_{k-1})^2\,\,\overset{{\rm
\,Lemma}\,\, \ref{L ampl breaks perturb}}\leqslant\\
&\leqslant \,c\,\|f\|^2_{C^2(\Sigma)}\,
\sum\limits_{k=1,\dots,N}(\tau_k-\tau_{k-1})^2\leqslant\,c\,\|f\|^2_{C^2(\Sigma)}\,\delta\,
T\,\underset{\delta\to 0}\to 0,
\end{align*}
where $\tau=T$ is an upper bound of ${\rm supp\,}f$ on $\Sigma$.

Thus, the bounded sequence of the sums $A_{\Pi^\delta}$ converges
to $A$ on a dense set of the smooth compactly supported controls.
This implies (\ref{Eq AI}) for smooth controls.
\smallskip

\noindent{\bf 4.}\,\,\,Approximating an arbitrary $f\in\mathscr F$
by smooth controls and passing to the relevant limit, one extends
(\ref{Eq AI}) to $\mathscr F$.
\end{proof}
\medskip

We denote the limit in (\ref{Eq AI}) by $
\int_{[0,\infty)}dY^\tau\, W\,dX^\tau $ and call this operator the
amplitude integral\,\,\,(AI), meaning that the image $Af$ is
composed from the break amplitudes of the waves $u^{X^\tau f}$ on
their forwards fronts \cite{BKach,B IPI 2022}.
\smallskip

\noindent$\bullet$\,\,\ Recall that $A:\mathscr F\to\mathscr H$ is
a unitary operator. As is easy to show, the AI-representation
\begin{equation}\label{Eq AI^*}
A^*\,=\,A^{-1}\,=\,\int_{[0,\infty)}dX^\tau\,
W^*\,dY^\tau\,:=\,w\,\text{--}\lim\limits_{\delta\to
0}A^*_{\,\Pi^\delta}
\end{equation}
holds, where $A^*$ acts by (\ref{Eq A^*}).

The AI intertwines the projections: the relations
\begin{equation}\label{Eq Intertwine}
AX^\xi\,=\,Y^\xi A,\quad A^*Y^\xi\,=\,X^\xi A^*,\qquad\xi\geqslant
0
\end{equation}
holds, as easily follows from the definitions and/or
orthogonalities (\ref{Eq orthogonal}).

Denote $A^\xi:= A\upharpoonright\mathscr F^\xi$. By (\ref{Eq
Intertwine}), $A^\xi$  is a unitary operator from $\mathscr F^\xi$
to $\mathscr H^\xi$, whereas the AI-representations
\begin{equation}\label{Eq AI repres A^xi A^xi^*}
A^\xi\,:=\,\int_{[\xi,\infty)}dY^\eta\, W\,dX^\eta,\quad
{A^\xi}^{\,\,*}=\int_{[\xi,\infty)}dX^\eta\, W^*\,dY^\eta
\end{equation}
easily follow from (\ref{Eq AI^*}) and orthogonality relations
(\ref{Eq orthogonal}).
\smallskip

\noindent$\bullet$\,\,\ Searching the construction of AI, one can
extend it to $f\in L_2^{\rm loc}(\Sigma)$ as follows. Let $\eta\in
C^\infty(\Sigma)$ obey $0\leqslant \eta(\cdot)\leqslant 1$,
$\eta\!\mid_{0\leqslant\tau\leqslant 1}=1$,
$\eta\!\mid_{\tau\geqslant 2}=0$; denote
$\eta^T:=\eta(\frac{\cdot}{T})$. Then we put
\begin{equation*}
Af\,:=\,\underset{T\to\infty}{\lim}\,A\,(\eta^Tf),
\end{equation*}
where the limit is understood in the sense of the local
$L_2$-convergence. The extended AI acts in the same way:
$$
(Af)(x)\,=\,\frac{f(r,\omega)}{r},\qquad x=r\omega\in\Bbb R^3,
$$
but, sure, the image may not belong to $\mathscr H$.

\subsubsection*{Bilinear forms}

\noindent$\bullet$\,\,\ The following facts are established in
\cite{BV_3,BV_11}.
\smallskip

Fix a $\xi>0$ and denote by $\chi^\xi$ the indicator
(characterictic function) of the part
$\Sigma^\xi=\{(\tau,\omega)\in\Sigma\,|\,\,\tau\geqslant \xi\}$.
Thus, we have $\mathscr F^\xi=\chi^\xi\mathscr F=\{\chi^\xi
f\,|\,\,f\in\mathscr F\}$. Denote $\mathscr
P^\xi:=\chi^\xi\mathscr P$. Recall that $\mathscr U^\xi$ and
$\mathscr D^\xi$ are the reachable and defect subspaces of the
perturbed system: the relation $\mathscr H^\xi=\mathscr
U^\xi\oplus\mathscr D^\xi$ holds, whereas $\mathscr D^\xi$ is
characterized by (\ref{Eq char Def}). Moreover, as is shown in
\cite{BV_11}, the relation
\begin{equation*}
\mathscr D^\xi\,=\,\overline{\{u^p(\cdot,0)\,|\,\,p\in\mathscr
P^\xi\}},\qquad \xi>0
\end{equation*}
is valid, which implies $\mathscr
H^\xi=\overline{\{u^f(\cdot,0)\,|\,\,f\in\mathscr
F^\xi\dotplus\mathscr P^\xi\}}$.

As is shown in \cite{BV_3}, Lemma 2.1, if $f\in\mathscr F^\xi$ and
$Wf=u^f(\cdot,0)=0$ holds then necessarily $f=0$. Nothing is
required to change in the proof to extend this result to the
polynomial controls. So, the map $f\mapsto u^f(\cdot,0)$ from
$\mathscr F^\xi\dotplus\mathscr P^\xi$ to $\mathscr H^\xi$ is {\it
injective} for all $\xi>0$. Note in addition that for $\xi=0$ this
may be wrong: the case ${\rm Ker\,}W\not=\{0\}$ is possible
\cite{BV s-points JMA 2010}.
\smallskip

\noindent$\bullet$\,\,\,The bilinear form
\begin{equation}\label{Eq Form 0}
\langle f,g\rangle_0\,:=\,
(u^f_0(\cdot,0),u^g_0(\cdot,0))_\mathscr H
\end{equation}
is well defined on $\mathscr G^\xi$. If $f,g\in \mathscr F$ holds
then, by the unitarity of $W_0$, we have
$(u^f_0(\cdot,0),u^g_0(\cdot,0))_\mathscr H=(f,g)_\mathscr F$
which follows to
\begin{equation}\label{Eq =}
\langle f,g\rangle_0\,=\,(f,g)_\mathscr F.
\end{equation}

Recall that $R:\mathscr F\to\mathscr F$ is the response operator.
The relations (\ref{Eq u=u0 outside D}) and (\ref{Eq Rf2a=0})
easily follow to the fact that the response $Rf$ is determined by
the values $f\!\mid_{0\leqslant \tau\leqslant 2a}$. For the given
$f,g \in {\mathscr G}^\xi$, represent
\begin{align*}
f=[1-\chi^{2a}]f+\chi^{2a}f\,=:f_1+f_2\,; \qquad
g=[1-\chi^{2a}]g+\chi^{2a}g\,=:g_1+g_2\,,
\end{align*}
where $f_1, g_1$ belong to ${\mathscr F}^\xi$ and vanish for
$\tau>2a$. The perturbed form
\begin{equation}\label{Eq Form perturb}
\langle f,g\rangle\,:=\,\langle f,g\rangle_0 + (Rf_1,g_1)
\end{equation}
is well defined on $\mathscr G^\xi:=\mathscr F^\xi\dotplus\mathscr
P^\xi$. The following results motivates the use of the perturbed
form.
\begin{Lemma}\label{L perturbed C-form}
Let $f,g \in {\mathscr G}^\xi$. The relation
\begin{equation}\label{(fg)GxiC=(WfWg)Hxi}
\langle f,g\rangle\,=\,(u^f({\,\cdot\,},0)\,,\, u^g({\,\cdot\,},0)
)_\mathscr H
\end{equation}
holds for $\xi> 0$.
\end{Lemma}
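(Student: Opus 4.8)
The plan is to reduce the bilinear form identity to the already-proven unitarity relation $\langle f,g\rangle_0=(f,g)_\mathscr F$ for ordinary controls (formula~(\ref{Eq =})), combined with the connecting-operator identity $C=I+R$ (formula~(\ref{Eq C=I+R})) and the decomposition of perturbed waves into unperturbed waves plus a Kirchhoff-type correction that is supported in the influence domain $D$. First I would treat the purely ordinary case $f,g\in\mathscr F^\xi$: here $\langle f,g\rangle=\langle f,g\rangle_0+(Rf_1,g_1)$, and since the response $Rf$ depends only on $f\!\mid_{0\leqslant\tau\leqslant 2a}$ one has $Rf=Rf_1$, $(Rf_1,g_1)=(Rf_1,g)=(Rf,g)$; then using (\ref{Eq =}) to rewrite $\langle f,g\rangle_0=(f,g)_\mathscr F$ and (\ref{Eq C=I+R}), (\ref{Eq 2.31}) one gets $\langle f,g\rangle=(f,g)+(Rf,g)=((I+R)f,g)=(Cf,g)=(u^f(\cdot,0),u^g(\cdot,0))_\mathscr H$, which is exactly (\ref{(fg)GxiC=(WfWg)Hxi}).

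The second step is to extend this to the polynomial summands. Writing $f=f^{\rm o}+p$, $g=g^{\rm o}+q$ with $f^{\rm o},g^{\rm o}\in\mathscr F^\xi$ ordinary and $p,q\in\mathscr P^\xi$, bilinearity reduces matters to the mixed terms $\langle f^{\rm o},q\rangle$, $\langle p,g^{\rm o}\rangle$ and the pure polynomial term $\langle p,q\rangle$. For these I would invoke the properties of polynomial controls recorded in (\ref{Eq u^p(0)=0}): the unperturbed wave $u^p_0$ vanishes at $t=0$ and is odd in $t$. The oddness together with the coincidence (\ref{Eq u=u0 outside D}) of $u^p$ and $u^p_0$ outside the influence domain $D$ (the Remark after (\ref{Eq u^p(0)=0}) notes that $u^p$ is well defined for $p\in\mathscr F\dotplus\mathscr P$) forces $u^p(\cdot,0)$ to be supported in $D\cap\{t=0\}$, i.e.\ in the annulus $\{0<|x|<2a\}$; I would then use the Kirchhoff representation (\ref{Eq Kirchhoff}) to express $u^p(\cdot,0)=-Iu^p(\cdot,0)$ (the free part being zero) and similarly for $u^f$ in the mixed pairings. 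On the outer-space side, $\langle f^{\rm o},q\rangle=\langle f^{\rm o},q\rangle_0+(Rf^{\rm o}_1,q_1)$ where $f^{\rm o}_1,q_1$ vanish for $\tau>2a$, so all quantities live on the bounded time window, and one computes directly from the Kirchhoff integral equation that both sides of (\ref{(fg)GxiC=(WfWg)Hxi}) agree; the key algebraic input is again $C=I+R$ applied to the truncated controls, now together with the fact that for polynomial controls the free wave contributes nothing at $t=0$.

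The main obstacle I anticipate is the bookkeeping in the mixed and pure-polynomial terms: unlike $\mathscr F$, the polynomial controls are \emph{not} in $L_2(\Sigma)$, so $W_0p$ is not defined as an element of $\mathscr H$ and the clean unitarity identity (\ref{Eq =}) is unavailable for them; one must instead exploit that the \emph{perturbed} wave $u^p(\cdot,0)$ \emph{is} in $\mathscr H$ because it is supported in a bounded annulus (this is precisely why the influence-domain coincidence (\ref{Eq u=u0 outside D}) and the oddness (\ref{Eq u^p(0)=0}) are essential), and track the cancellation of the formally-divergent free parts against each other. Concretely, one should verify that $\langle p,q\rangle$, which by definition (\ref{Eq Form perturb}) equals $\langle p,q\rangle_0+(Rp_1,q_1)$ with the truncated $p_1=\chi^{2a}(1-\chi^{2a})p$-type pieces, can be rewritten via the Kirchhoff correction $I$ so as to match $(u^p(\cdot,0),u^q(\cdot,0))_\mathscr H=(Iu^p(\cdot,0),Iu^q(\cdot,0))_\mathscr H$; here the self-adjointness and compactness of $R$ and the Volterra property of $I$ (used earlier to solve (\ref{Eq Kirchhoff})) are the technical levers. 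A density argument to pass between smooth and general controls within $\mathscr G^\xi$ closes the proof, but the genuine work is the finite-time, bounded-domain identity for the polynomial part.
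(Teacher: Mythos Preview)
Your treatment of the purely ordinary case $f,g\in\mathscr F^\xi$ is correct and is essentially a compressed version of the paper's computation for that block.

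However, your Step~2 rests on a genuine error. You invoke (\ref{Eq u^p(0)=0}) for $p\in\mathscr P^\xi$, but that relation is stated and holds only for $p\in\mathscr P$, i.e.\ for the \emph{full} polynomial controls on $[0,\infty)$. An element of $\mathscr P^\xi=\chi^\xi\mathscr P$ is a polynomial \emph{truncated} at $\tau=\xi$; it has a jump there, and by the GO formula (\ref{Eq GO 0}) that jump produces a nonzero amplitude on the forward front, so $u^p_0(\cdot,0)\neq 0$ in general. Consequently your claim that $u^p(\cdot,0)$ is supported in the annulus $\{0<|x|<2a\}$ fails, and the Kirchhoff rewriting $u^p(\cdot,0)=-(Iu^p)(\cdot,0)$ you build on it is unfounded. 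The scaffolding for the mixed and pure-polynomial terms collapses with it; the ``bookkeeping'' you anticipate as the obstacle is not the real issue.

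The paper's proof avoids this entirely by \emph{not} splitting $\mathscr G^\xi$ into its ordinary and polynomial summands. It uses instead the cut at $\tau=2a$ already built into the definition (\ref{Eq Form perturb}): write $f=f_1+f_2$ with $f_1=(1-\chi^{2a})f$, $f_2=\chi^{2a}f$, and likewise for $g$. The point is that $f_1,g_1$ are supported on $[\xi,2a]$ and hence \emph{always} lie in $\mathscr F^\xi$, even when $f$ has a polynomial tail; so $(Cf_1,g_1)=(u^{f_1}(\cdot,0),u^{g_1}(\cdot,0))_{\mathscr H}$ applies to that block via (\ref{Eq 2.31}), (\ref{Eq C=I+R}). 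For the remaining three blocks one needs only that $f_2\!\mid_{\tau<2a}=0$ forces $u^{f_2}(\cdot,0)=u^{f_2}_0(\cdot,0)$, both supported in $|x|\geqslant 2a$ (outside $D\cap\{t=0\}$); this holds regardless of growth at infinity. Expanding $(u^f(\cdot,0),u^g(\cdot,0))_{\mathscr H}$ into the four cross-terms and reassembling yields (\ref{(fg)GxiC=(WfWg)Hxi}) in one pass, with no separate polynomial analysis needed.
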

\begin{proof}\,\,\, Recall that $q\!\mid_{|x|>a}=0$
and begin with the case $\xi<a$.

\noindent{\bf 1.\,\,\,} Take $f,g\in\mathscr G^\xi$. By (\ref{Eq
Delay Rel}), the domain, in which the potential influences on the
waves initiated by the (delayed) controls from $\mathscr F^\xi$,
is $\{(x,t)\,|\,\,~t<|x|-2a+\xi\}$. Outside it one has $u^f=u^f_0$
and $u^g=u^g_0$. In particular, $u^f(x,0)=u^f_0(x,0)$ and
$u^g(x,0)=u^g_0(x,0)$ holds if $|x|\geqslant 2a$, whereas
$f\!\mid_{\tau<2a}=0$ implies $u^f(\cdot,0)\!\mid_{|x|<2a}=0$. By
the aforesaid, if $f\!\mid_{\tau<2a}=0$ holds, then one has
\begin{align}
\notag &
(u^f(\cdot,0),u^g(\cdot,0))_{\mathscr H}=\\
\notag &=\int_{|x|<2a}u^f(x,0)\,u^g(x,0)\,dx+\int_{|x|\geqslant 2a}u^f(x,0)\,u^g(x,0)\,dx=\\
\notag & =\,0\,+\,\int_{|x|\geqslant 2a}u^f(x,0)\,u^g(x,0)\,dx=\int_{|x|\geqslant 2a}u^f_0(x,0)\,u^g_0(x,0)\,dx=\\
\label{Eq 100} &=(u^f_0(\cdot,0), u^g_0(\cdot,0))_{\mathscr
H^\xi}.
\end{align}
\noindent{\bf 2.\,\,\,} For the given $f,g \in {\mathscr G}^\xi$,
represent
\begin{align*}
f=[1-\chi^{2a}]f+\chi^{2a}f\,=:f_1+f_2\,; \qquad
g=[1-\chi^{2a}]g+\chi^{2a}g\,=:g_1+g_2\,,
\end{align*}
and note that $f_1, g_1 \in {\mathscr F}^\xi$. With regard to the
above-made remarks, one has
\begin{align*}
&\left(u^f({\,\cdot\,},0)\,,\, u^g({\,\cdot\,},0)
\right)_{{\mathscr H}^\xi}=\left(u^{f_1+f_2}({\,\cdot\,},0)\,,\,
u^{g_1+g_2}({\,\cdot\,},0) \right)_{{\mathscr H}^\xi}=\\
&=\left(u^{f_1}({\,\cdot\,},0)+u^{f_2}({\,\cdot\,},0)\,,\,
u^{g_1}({\,\cdot\,},0)+u^{g_2}({\,\cdot\,},0)\right)_{{\mathscr H}^\xi}=\\
&= \left(u^{f_1}({\,\cdot\,},0)\,,\, u^{g_1}({\,\cdot\,},0)
\right)_{{\mathscr H}^\xi}+
  \left(u^{f_1}({\,\cdot\,},0)\,,\, u^{g_2}({\,\cdot\,},0) \right)_{{\mathscr H}^\xi}+\\
& +\left(u^{f_2}({\,\cdot\,},0)\,,\, u^{g_1}({\,\cdot\,},0)
\right)_{{\mathscr H}^\xi}+
  \left(u^{f_2}({\,\cdot\,},0)\,,\, u^{g_2}({\,\cdot\,},0) \right)_{{\mathscr H}^\xi}\overset{(\ref{Eq 100})}=\\
& =\left(u^{f_1}({\,\cdot\,},0)\,,\, u^{g_1}({\,\cdot\,},0)
\right)_{{\mathscr H}^\xi}+
  \left(u^{f_1}_0({\,\cdot\,},0)\,,\, u^{g_2}_0({\,\cdot\,},0) \right)_{{\mathscr H}^\xi}+\\
& \left(u^{f_2}_0({\,\cdot\,},0)\,,\, u^{g_1}_0({\,\cdot\,},0)
\right)_{{\mathscr H}^\xi}+
  \left(u^{f_2}_0({\,\cdot\,},0)\,,\, u^{g_2}_0({\,\cdot\,},0)
  \right)_{\mathscr H^\xi}\overset{(\ref{Eq 2.31}),\,(\ref{Eq Form 0})}=\\
&=\left(Cf_1,g_1\right)+\langle f_1,g_2\rangle_0
+\langle f_2,g_1\rangle_0+\langle f_2,g_2\rangle_0\overset{(\ref{Eq C=I+R})}=\\
&=(f_1,g_1)+(Rf_1,g_1)+\langle f_1,g_2\rangle_0 +\langle
f_2,g_1\rangle_0+\langle f_2,g_2\rangle_0
\overset{(\ref{Eq =})}=\\
&=\langle f_1,g_1\rangle_0+(Rf_1,g_1)+\langle f_1,g_2\rangle_0
+\langle f_2,g_1\rangle_0+\langle f_2,g_2\rangle_0=\\
&=\langle f,g\rangle_0+(Rf_1,g_1) \overset{(\ref{Eq Form
perturb})}= \langle f,g\rangle.
\end{align*}

\noindent{\bf 3.\,\,\,} The case $\xi \geqslant a$ is much simpler
and treated in the same way.
\end{proof}

This proof is very close to the proof of Lemma 2 in \cite{BV JIIPP
centrifug pot}.
\smallskip

\noindent$\bullet$\,\,\,Let $0<\xi<a$. Assume that we are given
the operator $R^\xi:=X^\xi R\upharpoonright\mathscr F^\xi$ acting
in $\mathscr F^\xi$. In accordance with (\ref{Eq Rf2a=0}), it
determines the potential radius by
\begin{equation}\label{Eq for a}
a\,=\,\,\inf\left\{b>0\,|\,\,R^\xi f
\!\mid_{\tau>2b-\xi}=0\,\,\,\text{for all}\,\,\,f\in\mathscr
F^\xi\right\}
\end{equation}
and, hence, determines the above used decompositions $f=f_1+f_2$
for $f\in\mathscr G^\xi$. As a consequence, one can write (\ref{Eq
Form perturb}) as
\begin{equation}\label{Eq final perturb form}
\langle f,g\rangle=\langle f,g\rangle_0+(R^\xi
f_1,g_1)\overset{(\ref{(fg)GxiC=(WfWg)Hxi})}=(u^f({\,\cdot\,},0)\,,\,
u^g({\,\cdot\,},0) )_\mathscr H
\end{equation}
and make use of this form in the inverse problem.

\subsubsection*{Space $\tilde{\mathscr H^\xi}$}

\noindent$\bullet$\,\,\,Fix a $\xi>0$ and recall that the map
$f\mapsto u^f(\cdot,0)$ is injective on $\mathscr G^\xi=\mathscr
F^\xi\dotplus\mathscr P^\xi$. In the mean time, by Lemma \ref{L
perturbed C-form} the form $\langle f,g\rangle$ is positive on
$\mathscr G^\xi$. Hence, endowing $\mathscr G^\xi$ with the inner
product $\langle f,g\rangle$, we have a pre-Hilbert space.
Completing it with respect to the corresponding norm, we get a
Hilbert space $\tilde{\mathscr H}^\xi$. We say it to be a {\it
model space}.

By $f\mapsto \tilde u^f(\cdot,0)$  we denote the embedding map
$\mathscr G^\xi\to\tilde{\mathscr H^\xi}$ and call its images
$\tilde u^f(\cdot,0)$ the model waves. In the mean time, the map
$f\mapsto u^f(\cdot,0)$ acts from $\mathscr G^\xi$ to $\mathscr
H^\xi$. By construction, in accordance with
(\ref{(fg)GxiC=(WfWg)Hxi}), the correspondence $U^\xi:\tilde
u^f(\cdot,0)\mapsto u^f(\cdot,0)$\,\,\,($f\in\mathscr G^\xi$) is
an isometry which extends to a unitary operator from
$\tilde{\mathscr H^\xi}$ onto $\mathscr H^\xi$. The model waves
play the role of the isometric copies of the true waves invisible
for the external observer. The observer, which possesses the
response operator, can determine the form $\langle f,g\rangle$,
construct the model space $\tilde{\mathscr H^\xi}$, and find the
copy $\tilde u^f$ of $u^f$ for any $f\in\mathscr G^\xi$.
\smallskip

\noindent$\bullet$\,\,\,The reduced control operator
$W^\xi:=W\upharpoonright\mathscr F^\xi$ acts from $\mathscr F^\xi$
onto $\mathscr U^\xi\subset\mathscr H^\xi$. Its dual $\tilde
W^\xi:f\mapsto \tilde u^f(\cdot,0)$ maps $\mathscr F^\xi$ onto its
image $\tilde{\mathscr U}^\xi\subset\tilde{\mathscr H}^\xi$ under
the embedding. Note the evident relation $U^\xi\tilde
W^\xi=W^\xi$.

The model space $\tilde{\mathscr H^\xi}$ contains a family of
subspaces
$$
\tilde{\mathscr H^\eta}\,:=\,\overline{\{\mathscr
F^\eta\dotplus\mathscr P^\eta\}},\qquad \eta\geqslant\xi
$$
(the closure in $\tilde{\mathscr H^\xi}$ of the images under the
embedding) and the corresponding projections $\tilde Y^\eta$ in
$\tilde{\mathscr H^\xi}$ onto $\tilde{\mathscr H^\eta}$. By the
isometry, we have
\begin{equation}\label{Eq YU=UY}
U^\xi\mathscr H^\eta=\mathscr H^\eta,\quad Y^\eta
U^\xi=U^\xi\tilde Y^\eta,\qquad \eta\geqslant \xi,
\end{equation}
where $Y^\eta$ cuts off functions on $\Bbb R^3\setminus
B_\eta(0)$.

\subsubsection*{Wave visualization and solving IP}

\noindent$\bullet$\,\,\ Fix a $\xi>0$ and recall that the operator
$A^\xi=A\upharpoonright\mathscr F^\xi$ acts from $\mathscr F^\xi$
to $\mathscr H^\xi$. The operator
$V^\xi:={A^\xi}^{\,\,*}W^\xi:\mathscr F^\xi\to\mathscr F^\xi$ is
called a {\it visualizing operator}. It acts by the rule
\begin{align}\label{Eq def V^xi}
\notag & (V^\xi f)(\tau,\omega)
=({A^\xi}^{\,\,*}u^f(\cdot,0))(\tau,\omega)=\\
&=({A}^{*}u^f(\cdot,0))(\tau,\omega)\overset{(\ref{Eq
A^*})}=\tau\,u^f(\tau\omega,0),\quad (\tau,\omega)\in\Sigma.
\end{align}
The external observer, which possesses this operator, gets an
option for a given $f$ to see the "photo" of the invisible wave
$u^f(\cdot,0)$ on the "screen" $\Sigma^\xi$, what motivates the
term "visualization". Let us show how to realize such an option.
Using the unitarity ${U^\xi}^{\,*}{U^\xi}=\Bbb I_{\tilde{\mathscr
H}^\xi}$ and the connection $U^\xi\tilde W^\xi=W^\xi$, we have
\begin{align}
\notag & V^\xi\overset{(\ref{Eq AI repres A^xi A^xi^*})}=
\left[\int_{[\xi,\infty)}dX^\eta\,{W^\xi}^{\,*}\,d\,Y^\eta\right]\,W^\xi
=\\
\notag &\overset{(\ref{Eq
YU=UY})}=\left[\int_{[\xi,\infty)}dX^\eta\,({U^\xi}\tilde
{W^\xi})^{\,*}\,d\,({U^\xi}\tilde Y^\eta {U^\xi}^{\,*}
)\right]\,{U^\xi} \tilde W^\xi=\\
\label{Eq visual}& =\left[\int_{[\xi,\infty)}dX^\eta\, {\tilde
{W^\xi}}^{\,*}\,d\,\tilde Y^\eta\right]\,\tilde W^\xi
\end{align}
and thus represent the defined in (\ref{Eq def V^xi}) operator
$V^\xi$ in terms of the model space. It is a representation that
allows us to solve the inverse problem: everything will be done if
we show how to determine $V^\xi$ from the inverse data.
\smallskip

\noindent$\bullet$\,\,\ The external observer probes the system by
controls $f\in\mathscr F^\xi$ and obtains the operator
$R^\xi:=X^\xi R\upharpoonright\mathscr F^\xi$ as a result of
measurements. Such an information enables him to recover the
potential $q$ in $\Bbb R^3\setminus B_\xi(0)$ by means of the
following procedure.
\smallskip

\noindent{\bf Step 1.\,\,\,}Having $R^\xi$, find the radius of the
potential by (\ref{Eq for a}): this enables one to decompose
controls by $f=f_1+f_2$ . Determine the form $\langle f,g\rangle$
on $\mathscr G^\xi$ by (\ref{Eq final perturb form}). Construct
the model space $\tilde{\mathscr H^\xi}$. Determine the operator
(embedding) $\tilde W^\xi:\mathscr F^\xi\to\tilde{\mathscr H^\xi}$
and its adjoint ${\tilde {W^\xi}}^{\,*}$.
\smallskip

\noindent{\bf Step 2.\,\,\,}Find the projections $\tilde Y^\eta$
in $\tilde{\mathscr H^\xi}$ onto $\tilde W^\xi\mathscr F^\eta$.
Constructing the AI, determine the visualizing operator $V^\xi$ by
(\ref{Eq visual}). Recall that it acts by $(V^\xi
f)(\tau,\omega)=\tau\,u^f(\tau\omega,0)$,\,\,\,\,$(\tau,\omega)\in\Sigma$.
\smallskip

\noindent{\bf Step 3.\,\,\,}Transferring the images $V^\xi f$ from
$\Sigma^\xi$ to $\Bbb R^3\setminus B_\xi(0)$ by the equality
$u^f(x,0)=|x|\,(V^\xi f)(|x|,\frac{x}{|x|})$, recover the operator
$W^\xi=W\upharpoonright\mathscr F^\xi$.
\smallskip

\noindent{\bf Step 4.\,\,\,} Possessing $W^\xi$ and using
$u^{f_{tt}}=u^f_{tt}\overset{(\ref{Eq 1})}=(\Delta-q)u^f$, recover
the graph of the operator $\Delta-q$ by
$$
{\rm graph}\,(\Delta-q)=\{[u^f,u^{f_{tt}}]\,|\,\,f\in\mathscr
F^\xi\cap C^2(\Sigma)\}=\{[W^\xi f,W^\xi
f_{tt}]\,|\,\,f\in\mathscr F^\xi\cap C^2(\Sigma)\}
$$
($[\cdot,\cdot]$ denotes a pair). The graph evidently determines
the potential $q$ in $\Bbb R^3\setminus B_\xi(0)$. The IP is
solved.
\smallskip

Possessing $R^\xi$ for all $\xi>0$, the observer can recover $q$
in the whole $\Bbb R^3$.

\subsubsection*{Comments}

\noindent$\bullet$\,\,\ If the response operator admits the
representation (\ref{Eq Repres R}) then to set up $R^\xi$ is to
give its kernel $p\!\mid_{\tau\geqslant2\xi}$ as the inverse data.
In this case, we have to determine a function $q$ of three
variables from a function $p$ of 1+2+2=5 variables that is an
overdetermined setup of the inverse problem. The question arises
to characterize the kernels, which correspond to the potentials.
One necessary condition is quite traditional and easily seen: $p$
must provide the positivity of the form $\langle f,g\rangle$. Can
one propose a list of the necessary and sufficient conditions? In
a sense, it is a question of the taste and definitions: what a
characterization is. Presumably, a characterization like a rather
long list of conditions in \cite{BV Rendiconti,BKor} can be
proposed. The meaning of these conditions is to provide
realizability of the procedures of the type {\it Step 1--Step 4}.
However, in our opinion, a simple characterization, like in
one-dimensional problems, is hardly possible.

On the not over-determined setup of the scattering problems see,
e.g., \cite{Rakessh}.
\smallskip

\noindent$\bullet$\,\,\ The model space $\tilde{\mathscr H^\xi}$
is a rather specific object: it is not a function space, since its
elements cannot be assigned to certain subsets (supports) in
$\Sigma^\xi$. This situation is not new: the same thing occurs in
problems in the bounded domains \cite{ABI}. Such effects are
connected with the quality of controllability of the system: the
presence of approximate controllability, but the absence of exact
controllability.

Nevertheless, such an exotic object can be adapted for the
elaboration of numerical algorithms. The thing is that
$\tilde{\mathscr H^\xi}$ is in fact an intermediate object,
whereas in algorithms the Amplitude Integral is in the use. Its
version (the so-called {\it amplitude formula}), which is the
result of the differentiation of the AI (\ref{Eq AI repres A^xi
A^xi^*}) w.r.t. $\xi$, is quite suitable for numerical realization
\cite{B IP 97,BGot,BIKS,Pestov Film}.

\noindent{\bf Key words:}\,\, three-dimensional dynamical system
governed by the locally perturbed wave equation, determination of
potential from inverse scattering data, boundary control method.
\bigskip

\noindent{\bf MSC:}\,\,35R30, 35Lxx, 47Axx.
\bigskip

\end{document}